\def\NZQ{\mathbb}               
\def\NN{{\NZQ N}}
\def\GG{{\NZQ G}}
\def\GG{{\mathcal G}}
\def\frk{\mathfrak}               
\def\Phi{{\frk N}}
\def\opn#1#2{\def#1{\operatorname{#2}}} 
\opn\chara{char} 
\opn\length{\ell} 
\opn\pd{pd} 
\opn\rk{rk}
\opn\projdim{proj\,dim} 
\opn\injdim{inj\,dim} 
\opn\rank{rank}
\opn\depth{depth} 
\opn\grade{grade} 
\opn\height{height}
\opn\embdim{emb\,dim} 
\opn\codim{codim}
\opn\Tr{Tr} 
\opn\bigrank{big\,rank}
\opn\superheight{superheight}
\opn\lcm{lcm}
\opn\trdeg{tr\,deg}
\opn\reg{reg} 
\opn\lreg{lreg} 
\opn\ini{in} 
\opn\lpd{lpd}
\opn\size{size}
\opn\mult{mult}
\opn\dist{dist}
\opn\cone{cone}
\opn\lex{lex}
\opn\rev{rev}
\opn\div{div} \opn\Div{Div} \opn\cl{cl} \opn\Cl{Cl}
\opn\Spec{Spec} \opn\Supp{Supp} \opn\supp{supp} \opn\Sing{Sing}
\opn\Ass{Ass} \opn\Min{Min}
\opn\Ann{Ann} \opn\Rad{Rad} \opn\Soc{Soc}
\opn\Syz{Syz} \opn\Im{Im} \opn\Ker{Ker} \opn\Coker{Coker}
\opn\Am{Am} \opn\Hom{Hom} \opn\Tor{Tor} \opn\Ext{Ext}
\opn\End{End} \opn\Aut{Aut} \opn\id{id} \opn\ini{in}
\opn\nat{nat}
\opn\pff{pf}
\opn\Pf{Pf} \opn\GL{GL} \opn\SL{SL} \opn\mod{mod} \opn\ord{ord}
\opn\Gin{Gin}
\opn\Hilb{Hilb}\opn\adeg{adeg}\opn\std{std}\opn\ip{infpt}
\opn\Pol{Pol}
\opn\sat{sat}
\opn\Var{Var}
\opn\Gen{Gen}
\opn\aff{aff} \opn\con{conv} \opn\relint{relint} \opn\st{st}
\opn\lk{lk} \opn\cn{cn} \opn\core{core} \opn\vol{vol}
\opn\link{link} \opn\star{star}
\opn\gr{gr}
\def\Ac{{\mathcal A}}
\def\Mc{{\mathcal M}}
\def\Nc{{\mathcal N}}
\newcommand{\qand}{\quad \mbox{and} \quad}
\def\pot#1#2{#1[\kern-0.28ex[#2]\kern-0.28ex]}
\opn\dirlim{\underrightarrow{\lim}}
\opn\inivlim{\underleftarrow{\lim}}
\def\Implies{\ifmmode\Longrightarrow \else
        \unskip${}\Longrightarrow{}$\ignorespaces\fi}
\def\implies{\ifmmode\Rightarrow \else
        \unskip${}\Rightarrow{}$\ignorespaces\fi}
\def\iff{\ifmmode\Longleftrightarrow \else
        \unskip${}\Longleftrightarrow{}$\ignorespaces\fi}
\newtheorem{Theorem}{Theorem}[section]
\newtheorem{Lemma}[Theorem]{Lemma}
\newtheorem{Corollary}[Theorem]{Corollary}
\newtheorem{Proposition}[Theorem]{Proposition}
\theoremstyle{definition}
\newtheorem{Remark}[Theorem]{Remark}
\newtheorem{Example}[Theorem]{Example}
\newtheorem{Definition}[Theorem]{Definition}
\newtheorem{Conjecture}[Theorem]{Conjecture}
\newtheorem{Notation}[Theorem]{Notation}
\let\epsilon\varepsilon
\let\phi=\varphi
\let\kappa=\varkappa
\opn\dis{dis}
\opn\height{height}
\opn\dist{dist}
\def\pnt{{\raise0.5mm\hbox{\large\bf.}}}
\opn\Lex{Lex}
\begin{document}

\title{Gapfree graphs and powers of edge ideals with linear quotients}

\author[N.~Erey]{Nursel Erey}
\author[S.~Faridi]{Sara Faridi}
\author[T.~H.~H\`a]{T\`ai Huy H\`a}
\author[T.~Hibi]{Takayuki Hibi}
\author[S.~Kara]{Selvi Kara}
\author[S.~Morey]{Susan Morey}

\address{(Nursel Erey) Gebze Technical University, Department of Mathematics, 41400 Gebze, Kocaeli, Turkey}
\email{nurselerey@gtu.edu.tr}
\address{(Sara Faridi) Department of Mathematics \& Statistics, Dalhousie University, 6297 Castine Way, PO BOX 15000, Halifax, NS, Canada B3H 4R2}
\email{faridi@dal.ca}
\address{(T\`ai Huy H\`a) Mathematics Department, Tulane University, 6823 St. Charles Avenue, New Orleans, LA 70118, USA}
\email{tha@tulane.edu}
\address{(Takayuki Hibi) Department of Pure and Applied Mathematics, Graduate School of Information Science and Technology, Osaka University, Suita, Osaka 565--0871, Japan}
\email{hibi@math.sci.osaka-u.ac.jp}
\address{(Selvi Kara) Department of Mathematics, Bryn Mawr College, Bryn Mawr, PA 19010, USA}
\email{skara@brynmawr.edu}
\address{(Susan Morey) Department of Mathematics, Texas State University, 601 University Dr., San Marcos, TX 78666, USA}
\email{morey@txstate.edu}

\subjclass[2020]{05E40, 13D02.}

\keywords{edge ideal, gapfree graph, linear resolution, linear quotients, regularity.}

\begin{abstract}  
Let $I(G)$ be the edge ideal of a gapfree graph $G$.
An open conjecture of Nevo and Peeva states that $I(G)^q$ has linear resolution for $q\gg 0$. We present a promising approach to this challenging conjecture by investigating the stronger property of linear quotients. Specifically, we make the 
conjecture that if $I(G)^q$ has linear quotients for some integer $q\geq 1$, then $I(G)^{s}$ has linear quotients for all $s\geq q$.  We give a partial solution to this conjecture, and identify conditions under which only finitely many powers need to be checked. It is known that if $G$ does not contain a cricket, a diamond, or a $C_4$,  then $I(G)^q$ has linear resolution for $q \geq 2$.  We construct a family of gapfree graphs $G$ containing cricket, diamond, $C_4$ together with $C_5$ as induced subgraphs of $G$ for which $I(G)^q$ has linear quotients for $q \ge 2$.     
\end{abstract}

\maketitle

\section{Introduction}
Let $G$ be a finite \emph{simple} graph, i.e., a graph that does not contain loops or multiple edges, with vertex set $V(G) = \{1, \ldots, n\}$ and edge set $E(G)$.  Let $S = K[x_1, \ldots, x_n]$ denote the polynomial ring in $n$ variables over a field $K$.  The \emph{edge ideal} of $G$, denoted by $I(G)$, is the ideal of $S$ generated by squarefree quadratic monomials $x_ix_j$ for all $\{i, j\} \in E(G)$ (cf. \cite{V}). The study of edge ideals has become a central topic in combinatorial commutative algebra, showcasing the deep interplay between combinatorial methods and algebraic structures; see, for instance, \cite{BN, DN, EO, HNT, LTT, P1, P2, SF1, SF2, V1} for various recent aspects of this study. 
The present paper builds upon this rich foundation, advancing the understanding of powers of edge ideals and their algebraic properties.

A fundamental question in examining homogeneous ideals is when these ideals have linear resolution. In 2004, Herzog, Hibi and Zheng \cite{HHZ} showed that if $I(G)$ has linear resolution then all powers of $I(G)$ also have linear resolution. By a result of Fr\"oberg \cite{F}, this condition is equivalent to the complementary graph of $G$ being chordal.  A finite simple graph is called \emph{chordal} if every cycle of length at least 4 has a \emph{chord}, i.e., an edge connecting two non-consecutive vertices on the cycle. The proof of Herzog, Hibi and Zheng relies on Dirac's theorem on chordal graphs, a cornerstone of classical graph theory, and leverages Gr\"obner bases techniques.  

Furthermore, Francisco, H\`a and Van Tuyl (see \cite{NP}) observed in unpublished work that if $I(G)^q$ has linear resolution for some $q \in \NN$ then $G$ must be gapfree. A finite simple graph $G$ is \emph{gapfree} if, for any two edges $e$ and $e'$ with $e \cap e' = \emptyset$, there is $f \in E(G)$ with $e \cap f \neq \emptyset$ and $e' \cap f \neq \emptyset$. This led to the natural question: 

\begin{center}
\emph{``Is $G$ being gapfree both a necessary and sufficient condition for $I(G)^q$ to have linear resolution for all $q \ge 2$?''} 
\end{center}
In 2013, Nevo and Peeva \cite{NP} answered this question negatively by constructing a gapfree graph $G$ for which $I(G)^2$ does not have linear resolution. In light of this investigation, Nevo and Peeva \cite{NP} raised the following conjecture:

\begin{Conjecture} 
\label{NP}
If $G$ is gapfree, then $I(G)^q$ has linear resolution for all $q \gg 0$.
\end{Conjecture}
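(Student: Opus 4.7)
The plan is to attack Conjecture~\ref{NP} through the stronger combinatorial property of linear quotients, which implies linear resolution but is verified by exhibiting an explicit ordering on the generators satisfying a colon-ideal condition, making it more amenable to induction on the power.

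The first step is to formulate and investigate the \emph{persistence} statement: if $I(G)^q$ has linear quotients for some integer $q \geq 1$, then $I(G)^s$ has linear quotients for all $s \geq q$. If established, this reduces the Nevo--Peeva conjecture to showing that, for some finite $q$ depending on combinatorial invariants of $G$, linear quotients hold. To prove persistence I would seek a ``multiplication rule'': given a linear quotients order on the minimal generators of $I(G)^q$, construct an order on the generators of $I(G)^{q+1} = I(G) \cdot I(G)^q$ by lexicographically combining a fixed ordering of $I(G)$ with the given ordering of $I(G)^q$. The gapfree hypothesis should then supply the connecting edge whenever two pieces of a generator are vertex-disjoint, ensuring that the colon ideal of each new generator by its predecessors is generated by variables.

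The second step is to identify a finite threshold $q_0 = q_0(G)$, depending on combinatorial invariants such as the number of vertices or the induced matching number, beyond which the multiplication rule can be applied unconditionally. Combined with step one, this reduces the conjecture to checking linear quotients for finitely many initial powers $q \le q_0$, which can then be tackled by case analysis or algorithm. The third task is to verify the base cases for structured subfamilies: for graphs avoiding cricket, diamond, and $C_4$ as induced subgraphs, linear resolution from $q=2$ is already known; for gapfree graphs containing these obstructions, constructing explicit families (such as ones containing $C_5$, cricket, diamond, and $C_4$) and exhibiting a valid ordering would provide both evidence and a template for the general argument.

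The main obstacle will be step one. Linear quotients are notoriously fragile under products: even when both $I$ and $J$ have linear quotients, $IJ$ need not. The colon ideal of the $(k+1)$-st generator by its predecessors must be generated by \emph{variables}, and controlling this when gapfree graphs admit $C_5$, cricket, or diamond induced subgraphs---precisely the configurations that obstruct linear resolution at low powers---requires a delicate, perhaps canonical choice of ordering. Finding the right multiplication scheme and carefully bounding where new ``bad'' pairs of generators can appear is the technical crux of the approach.
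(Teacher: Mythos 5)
The statement you are addressing is the Nevo--Peeva conjecture, which the paper does not prove: it is explicitly presented as an open conjecture, and the paper's contribution is a program towards it (Conjectures \ref{LinearQuotient} and \ref{NewOrleans}, duplication/expansion results, explicit families such as $C_5$ and the CDCC graphs, and Theorem \ref{thm.LinQuotients} reducing a modified persistence statement to finitely many powers). Your proposal is essentially a restatement of that same program --- replace ``linear resolution'' by the stronger property of linear quotients, conjecture persistence from a power $q$ to all $s \ge q$ via an ordering on $I(G)^{q+1}$ built by multiplying a fixed order on $I(G)$ against a linear quotients order on $I(G)^q$ (the paper's ``efficient'' and ``compatible'' orderings), seek a finite threshold, and check base cases on structured families --- so it is not a different route, but it is also not a proof.

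The concrete gaps are exactly the points your plan leaves to future work, and they are the points the paper also cannot close. First, your ``persistence'' step is Conjecture \ref{LinearQuotient}/\ref{NewOrleans}; the paper proves it only under substantial extra hypotheses (an admissible edge ordering, the compatible orders of Definition \ref{def.NewOrder}, and the assumption that $I^2,\dots,I^q$ with $q\ge 7$ already have linear quotients in those orders), and the inductive colon-ideal analysis there is a long case analysis that your sketch does not supply; the gapfree hypothesis alone does not obviously ``supply the connecting edge,'' since linear quotients is fragile under products, as you note. Second, your step two --- a finite threshold $q_0(G)$ beyond which the multiplication rule applies ``unconditionally'' --- has no justification and is precisely the open content of the conjecture: the Nevo--Peeva example shows some gapfree graphs fail at $q=2$, and nothing in your argument (or the paper) shows that every gapfree graph has \emph{some} power with linear quotients, which your whole reduction presupposes. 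Third, since linear quotients is strictly stronger than linear resolution, even a complete execution of your program could in principle miss gapfree graphs whose high powers have linear resolution but never linear quotients; this is a risk the paper accepts but one you should acknowledge. In short, you have reproduced the paper's strategy at the level of intentions, but none of the three steps is carried out, so there is no proof of Conjecture \ref{NP} here --- nor is one claimed by the paper.
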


Conjecture \ref{NP} has garnered significant interest (cf. \cite{BBH, Big, E18, E19, MV} and references therein), but remains unresolved in general. While it has been verified for $q = 2,3$ when $I(G)$ has small regularity (see \cite{MV}) and for \emph{random} graphs with restricted edge probability (see \cite{BY}), progress has been limited. 
A promising approach to this challenging conjecture is to explore the stronger property of having \emph{linear quotients}. This property has also been widely studied and, in particular, if a monomial ideal generated in a single degree has linear quotients then it has linear resolution (cf. \cite{HHgtm260}). Thus, as observed before, if $I(G)^q$ has linear quotients, for some $q \in \NN$, then $G$ is necessarily a gapfree graph. We make the following conjecture:

\begin{Conjecture} 
\label{LinearQuotient}
If $I(G)^q$ has linear quotients for some $q \in \NN$, then $I(G)^{s}$ has linear quotients for all $s \ge q$.
\end{Conjecture}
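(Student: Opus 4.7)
\medskip

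\noindent \textbf{Proof proposal.} I would attempt the statement by induction on $s\geq q$, reducing the whole conjecture to the single step ``linear quotients for $I(G)^s$ implies linear quotients for $I(G)^{s+1}$.'' The strategy is to lift a linear-quotients order on $I(G)^s$ to one on $I(G)^{s+1}$ by multiplying through by edges, rather than trying to order the generators of $I(G)^{s+1}$ from scratch.

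Concretely, suppose $I(G)^s$ has linear quotients with respect to an ordering $u_1\prec u_2\prec \cdots\prec u_m$ of its minimal monomial generators, and fix any ordering $f_1\prec f_2\prec\cdots\prec f_r$ of the edges $f_k=x_{a_k}x_{b_k}$ of $G$. For each generator $w\in G(I(G)^{s+1})$ the set $\Phi(w)=\{(i,k): w=u_if_k\}$ is nonempty; choose the canonical pair $(i(w),k(w))$ by taking the lex-smallest element of $\Phi(w)$, and order the generators by $w\prec w'$ iff $(i(w),k(w))<_{\lex}(i(w'),k(w'))$. I would then try to verify the linear-quotients property for this order by analysing, for each $w=u_{i}f_{k}$, the colon ideal $J_w:=(w'\mid w'\prec w):w$ and producing, for every $w'\prec w$, a variable in $J_w$ dividing $w'/\gcd(w,w')$.

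The predecessors of $w$ split into two natural families. Those of type $w'=u_i f_{k'}$ with $k'<k$ I would handle directly: since $u_i$ is common, one has $w'/\gcd(w,w')=f_{k'}/\gcd(f_k,f_{k'})$, and either $f_k,f_{k'}$ share a vertex (so a variable of $f_{k'}$ is already in $J_w$) or they form a gap, in which case the gapfree hypothesis — automatically available, since $I(G)^q$ having linear quotients forces $G$ to be gapfree — produces a bridging edge $f$ with $u_if\prec w$ and $u_if\prec w'$, giving the required variable. Predecessors of type $w'=u_{i'}f_{k'}$ with $i'<i$ I would route through the induction hypothesis: apply the linear-quotients property of the ordering of $I(G)^s$ to the pair $(u_{i'},u_i)$ to extract a variable $x$ in the colon with $xu_i=yu_{i''}$ for some $i''<i$ and variable $y\mid u_i$; then multiply by $f_k$ to transfer this relation to $w$, provided $x$ does not already divide $f_k$ (the case it does requires a separate short argument using $u_{i''}f_k$ as a witness).

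The main obstacle, and the place where I expect any genuine proof to have to do real work, is the interaction between the two types of predecessors: the ``multiply by $f_k$'' transfer in the second case may inadvertently cancel with $f_k$'s variables, pushing the witness out of $G(I(G)^{s+1})$ or past $w$ in the order, and the canonical-factorization convention may then assign the witness a different pair $(i'',k'')$ than one expects. Controlling this well enough to conclude that $J_w$ is generated by variables probably requires a more subtle choice of canonical factorization — perhaps one adapted to $w$ itself, e.g.\ choosing $(i(w),k(w))$ to minimize not $(i,k)$ lex but a combinatorial invariant such as the multidegree overlap with a reference generator. A secondary obstacle is that even the base case $s=q$ leaves open which edge orders $f_1\prec\cdots\prec f_r$ are compatible with a given linear-quotients order on $I(G)^q$; it may be necessary to prove a structural lemma that the linear-quotients order on $I(G)^q$ itself induces a canonical edge order (for instance by reading off the first edge-factors of the $u_i$), and to carry out the inductive step relative to that induced order. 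If the induction as described cannot be pushed through, a fallback would be to abandon multiplicative lifting and instead study the colon ideals $I(G)^{s+1}:x_v$ vertex by vertex, hoping that the decomposition $I(G)^{s+1}:x_v=(I(G)^s:x_v)\cdot I(G)+\cdots$ transmits linear quotients along a well-chosen vertex order.
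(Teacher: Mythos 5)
The statement you are trying to prove is not proved in the paper at all: it is stated as an open conjecture (Conjecture \ref{LinearQuotient}), and the paper only offers partial results toward it. So your proposal cannot be measured against a proof of the paper's -- there is none -- and, as you yourself acknowledge, your sketch is not a proof either. What you describe is essentially the paper's own strategy: lifting a linear quotients order of $I(G)^s$ to $I(G)^{s+1}$ by multiplying through by edges in a fixed order is exactly the ``efficient ordering'' of Conjecture \ref{NewOrleans} and the compatible orders $\Mc^{(q)}$ of Definition \ref{def.NewOrder}. The paper carries this strategy out only partially: it introduces \emph{admissible} edge orderings (Definition \ref{def.goodOrder}, Lemma \ref{lem.GensOrder}), proves the inductive step $I^q\Rightarrow I^{q+1}$ for $q\ge 7$ (Theorem \ref{thm.LinQuotients}) and the steps $2\to 3$ and $3\to 4$ (Propositions \ref{prop.2to3} and \ref{prop.4to5}), and explicitly admits in a remark that the steps $k=4,5,6$ are not handled in general. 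Your anticipated structural lemma about which edge orders are usable is precisely the role the admissible ordering plays.

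Beyond the obstacles you already flag in the ``type 2'' transfer, there is a concrete gap in the part you claim to handle directly. For predecessors $w'=u_if_{k'}$ of $w=u_if_k$ with $f_k,f_{k'}$ disjoint, gapfreeness gives a bridging edge $f$, but with your lex-smallest canonical factorization there is no reason the witness $u_if$ precedes $w$: if $f$ comes after $f_k$ in your arbitrary edge order and $u_if$ admits no smaller factorization, then $u_if\succ w$ and the witness is useless. Controlling exactly this phenomenon is why the paper requires the edge order to be admissible and still needs a long case analysis (Claims 1 and 2 in the proof of Theorem \ref{thm.LinQuotients}) tracking how a witness re-factors in the compatible order; even with those tools the argument requires $q\ge 7$, and the inductive step for small powers remains open. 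In short: your approach is the right one in spirit and coincides with the paper's, but the statement itself is unresolved, and the steps you defer (interaction of predecessor types, canonical re-factorization of witnesses, compatibility of the edge order) are exactly where the genuine difficulty lies.
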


For a subset $W \subset V(G)$, the \emph{induced subgraph} $G_W$ is the graph with vertex set $W$ consisting of all edges $\{i,j\} \in E(G)$ where $i,j \in W$. It was shown in \cite {B, E18, E19} that $I(G)^q$ have linear resolution for all $q \geq 2$ if $G$ is gapfree and contains no induced subgraph isomorphic to: (i) cricket; (ii) diamond; (iii) $C_4$ (see Figure \ref{fig:subgraphs}).  

\begin{figure}[h]
\centering
\begin{tikzpicture}[scale=1.0]
\coordinate (a) at (-1,1){};
\coordinate (b) at (1,1){};
\coordinate (c) at (0,0){};
\coordinate (d) at (-1,-1){};
\coordinate (e) at (1,-1){};
\fill(a)circle(0.7mm);
\fill(b)circle(0.7mm);
\fill(c)circle(0.7mm);
\fill(d)circle(0.7mm);
\fill(e)circle(0.7mm);
\draw(a)--(c)--(e);
\draw(b)--(c)--(d);
\draw(d)--(e);
\end{tikzpicture}
\, \, \, \, \, \, \, \, \, \, 
\begin{tikzpicture}[scale=1.0]
\coordinate (a) at (0,1){};
\coordinate (b) at (-1,0){};
\coordinate (c) at (1,0){};
\coordinate (d) at (0,-1){};
\fill(a)circle(0.7mm);
\fill(b)circle(0.7mm);
\fill(c)circle(0.7mm);
\fill(d)circle(0.7mm);
\draw(a)--(b)--(d)--(c)--cycle;
\draw(b)--(c);
\end{tikzpicture}
\, \, \, \, \, \, \, \, \, \, 
\begin{tikzpicture}[scale=1.0]
\coordinate (a) at (0,1){};
\coordinate (b) at (-1,0){};
\coordinate (c) at (1,0){};
\coordinate (d) at (0,-1){};
\fill(a)circle(0.7mm);
\fill(b)circle(0.7mm);
\fill(c)circle(0.7mm);
\fill(d)circle(0.7mm);
\draw(a)--(b)--(d)--(c)--cycle;
\end{tikzpicture}
\caption{Cricket; Diamond; $C_4$} \label{fig:subgraphs}
\end{figure}
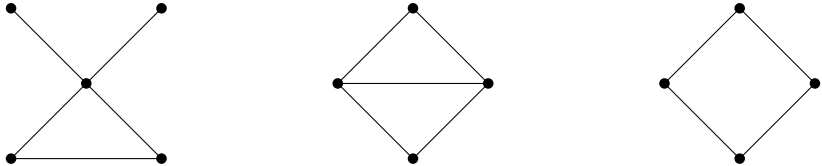 

For instance, the pentagon $C_5$ is gapfree, but its complementary graph is not chordal. Consequently, $I(C_5)$ does not have linear resolution. However, $C_5$ does not contain any induced subgraph that is isomorphic to a cricket (or a diamond, or $C_4$), so $I(C_5)^q$ has linear resolution for all $q \ge 2$. On the other hand,  it was shown (cf. \cite[Corollary 10.1.7 and Theorems 10.1.9 and 10.2.5]{HHgtm260}) that if the complementary graph of $G$ is chordal then all powers of $I(G)$ have linear quotients. Beside the anticycles, as exhibited in a recent preprint \cite{B+} which appears after the first draft of this paper was written, there are currently no other known examples of a gapfree graph $G$ whose complementary graph is not chordal, yet for which all powers $I(G)^q$ with $q \geq 2$ have linear quotients. This motivates the search for natural classes of such graphs. 
 
In this paper, we establish a framework for addressing Conjecture \ref{LinearQuotient}.  In Section \ref{sec.squarefree monomial duplication}, we examine the duplication technique, which when applied to graphs uses duplication of vertices in $G$ to produce new graphs $G^x$. This construction is standard in graph theory and has been used to obtain interesting algebraic results (cf. \cite{FHVT}). We show that for a general squarefree monomial ideal $I$ if a power $I^q$ has linear quotients, then so does $(I^x)^q$ (Proposition \ref{p:duplication}).  In Section~\ref{sec.dup}, we show similar result holds for graphs when vertex duplication is replaced by graph expansion, where a vertex is replaced by a clique, as long as the resulting graph is still gapfree (Theorem \ref{t:complete}).

Section \ref{sec.PureOrder} studies Conjecture \ref{NewOrleans}, which presents a strong case for Conjecture \ref{LinearQuotient} by explicitly describing a possible linear quotient ordering for $I(G)^s$, for $s \ge q$, from that of $I(G)^q$. Particularly, Proposition \ref{pentagon} shows that Conjecture \ref{NewOrleans} holds for the pentagon $C_5$. In other words, all powers $I(C_5)^q$ with $q \geq 2$ have linear quotients.
Section \ref{sec.Pentagon} extends the study in Section \ref{sec.PureOrder} to a special class of graphs which are gapfree but may contain critical induced subgraphs such as crickets, diamonds and $C_4$. In Propositions \ref{p:SusanSelvi} and \ref{l:pentagon_4inner_edges}, it is shown that, for these graphs, all powers $I(G)^q$ with $q \geq 2$ have linear quotients. The main result of the section (Theorem~\ref{pasadenaAIM}) extends the aforementioned result to an infinite family of graphs.

Finally, Section \ref{sec.Partial} presents a refined analysis of modified versions of Conjectures \ref{NewOrleans} and \ref{LinearQuotient}, culminating in Theorem \ref{thm.LinQuotients}, which exhibits that if $I(G)^2, \ldots, I(G)^7$ have linear quotients with a special ordering of their generators, then $I(G)^q$, for all $q \geq 2$, have linear quotients. This result reduces the verification of Conjectures \ref{NewOrleans} and \ref{LinearQuotient} to finitely many powers and, therefore, suggests that resolving these conjectures may be within reach.


\section{Duplicating a variable and linear quotients of powers}\label{sec.squarefree monomial duplication}

Let $I$ be an ideal of $S$ which is generated by monomials of the same degree and let $\{u_1, \ldots, u_r\}$ be the unique minimal monomial generators of $I$.  We then call $u_1, \ldots, u_r$ \emph{the} generators of $I$ and use the notation $\mathcal{G}(I)$ to denote the set $\{u_1,\ldots, u_r\}$. We say that $I$ has \emph{linear quotients} if there is an ordering $u_1  > \cdots > u_r$ of the generators of $I$ for which the colon ideal $(u_1, \ldots, u_{i}):u_{i+1}$ is generated by a subset of $\{x_1, \ldots, x_n\}$ for each $1 \leq i < r$.  If $I$ has linear quotients for  $u_1  > \cdots > u_r$, we call this ordering a \emph{linear quotients order} for $I$. If $I$ has linear quotients, then $I$ has linear resolution (\cite[Proposition 8.2.1]{HHgtm260}). 

When showing an ideal has a linear quotients order, we will frequently examine individual monomial pairs. For simplicity, we will use the following notation below for two monomials $M$ and $N$
$$M:N=\frac{M}{\gcd(M,N)}.$$
In particular, $M \mid (M:N) N$ and $M:N$ is the minimal monomial under inclusion for which this property holds.

The following statement is an equivalent version of the definition of an ideal with linear quotients. The proof follows immediately from the definitions and so is not included.

\begin{Lemma}[{\bf Linear quotients}]\label{l:lq-criterion} Let $I$ be a monomial ideal, and fix an order $u_1>u_2>\cdots>u_r$ on the
minimal generating set of $I$. Then the following statements are equivalent.
\begin{enumerate}
    \item  $I$ has linear quotients with respect to the order above;
\item if $\deg(u_\alpha : u_\beta)>1$ for some indices $\alpha < \beta$, then there is  some index $\gamma < \beta$ 
such that  $u_\gamma : u_\beta=z$ where $z$ is a variable and $z \mid (u_\alpha : u_\beta)$.
\end{enumerate}
\end{Lemma}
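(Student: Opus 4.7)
My plan is to translate the linear quotients condition into a concrete statement about the colon ideals and then verify each implication by manipulating minimal monomial generating sets. Since the $u_j$ are monomials, the colon of the sum distributes as
\[
(u_1, \ldots, u_{\beta-1}) : u_\beta \,=\, (u_1 : u_\beta,\; u_2 : u_\beta,\; \ldots,\; u_{\beta-1} : u_\beta)
\]
for each $\beta$. Hence $I$ has linear quotients with respect to the given order exactly when every such colon ideal is generated by variables. Throughout, I will use the standard fact that the minimal monomial generating set of a monomial ideal is uniquely determined and is a subset of any monomial generating set; in particular, any minimal generator of the colon above must equal some $u_j : u_\beta$.

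For $(1) \Rightarrow (2)$, I will fix $\alpha < \beta$ with $\deg(u_\alpha : u_\beta) > 1$. The monomial $u_\alpha : u_\beta$ lies in the colon ideal, which by hypothesis is generated by variables, so it is divisible by some variable $z$ in the minimal generating set of the colon. By the fact above, $z$ must equal $u_\gamma : u_\beta$ for some $\gamma < \beta$ (noting that $u_\gamma : u_\beta \neq 1$ because $u_\gamma$ and $u_\beta$ are distinct minimal generators of $I$, so neither divides the other). This is precisely the conclusion of $(2)$.

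For $(2) \Rightarrow (1)$, I will fix $\beta$ and consider an arbitrary minimal monomial generator $w$ of $(u_1 : u_\beta, \ldots, u_{\beta-1} : u_\beta)$. Then $w = u_\alpha : u_\beta$ for some $\alpha < \beta$, and I claim $\deg w = 1$. If not, condition $(2)$ produces $\gamma < \beta$ with $u_\gamma : u_\beta = z$ a variable dividing $w$; then $z$ is a proper divisor of $w$ that still lies in the colon ideal, contradicting the minimality of $w$. Hence every minimal generator of the colon is a variable, so the linear quotients condition at step $\beta$ holds.

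There is no genuine obstacle — the argument is essentially a bookkeeping exercise using the definitions, which is exactly why the authors state the result without proof. The only point meriting attention is the passage between ``the colon is generated by variables'' and ``some $u_\gamma : u_\beta$ is itself a variable,'' which relies on the uniqueness of the minimal monomial generating set to force the variable generator to appear among the $u_j : u_\beta$ verbatim rather than merely as a divisor.
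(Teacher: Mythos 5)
Your proof is correct: both implications follow from distributing the colon over the monomial generators, $(u_1,\ldots,u_{\beta-1}):u_\beta=(u_1:u_\beta,\ldots,u_{\beta-1}:u_\beta)$, together with the uniqueness of the minimal monomial generating set, and the fact that $u_\gamma:u_\beta\neq 1$ for distinct minimal generators takes care of the degenerate degree-zero case in both directions. The paper omits the proof entirely, stating it follows immediately from the definitions, and your argument is exactly that routine verification.
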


The equivalence of items $(1)$ and $(2)$ will be used throughout the paper when showing an ideal has linear quotients. The first result we show is that linear quotients are preserved under variable duplication. Since there are multiple ways to duplicate a variable, we establish a definition and notation.

\begin{Definition}[{\bf Duplicating a variable}]\label{d:duplication-new}
Let $I$ be a squarefree monomial ideal, suppose $x$ is a variable, and suppose $m_1,\ldots,m_q$ are all the monomials in $\GG(I)$  that are divisible by $x$. Let $y$ be a new variable that does not divide any generator of $I$. The \emph{duplicate ideal of $I$ by $x$} is the ideal
$$I^x=I+\big (\frac{m_1}{x}\cdot y, \ldots, \frac{m_q}{x}\cdot y \big ).$$
If $x$ does not divide any generator of $I$, then $I^x=I$.
\end{Definition}

We next show that linear quotients of squarefree monomial ideals are preserved under variable duplication.  For a monomial $M$ and a variable $x$, we use the notation $\deg_x(M)$  to denote the maximum value of $i$ such that $x^i \mid M$.

\begin{Proposition}\label{p:duplication} Let $I$ be an equigenerated  squarefree monomial ideal,  $x$ a variable, and $s$ a positivie integer. If $I^s$ has linear quotients, then $(I^x)^s$ has linear quotients.
\end{Proposition}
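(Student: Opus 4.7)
The plan is to lift a fixed linear quotients order on $\GG(I^s)$ to one on $\GG((I^x)^s)$ and verify it via Lemma~\ref{l:lq-criterion}. Fix a linear quotients order $u_1 > u_2 > \cdots > u_N$ of $\GG(I^s)$; for each $u_i$, set $a_i = \deg_x(u_i)$, write $u_i = x^{a_i}\bar u_i$ with $x \nmid \bar u_i$, and for $0 \leq k \leq a_i$ put $u_i^{(k)} := u_i(y/x)^k = x^{a_i - k}\,y^k\,\bar u_i$. Because $I$ is squarefree, any factorization $u_i = v_1\cdots v_s$ with $v_j \in \GG(I)$ has exactly $a_i$ factors divisible by $x$, so switching $k$ of those to $v_j y/x$ realizes $u_i^{(k)} \in (I^x)^s$. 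A direct divisibility check, using that no generator of $\GG(I^s)$ divides another, shows the $u_i^{(k)}$ are pairwise incomparable. Every $s$-fold product from $\GG(I^x)$ equals $v(y/x)^k$ for some $v \in I^s$ with $k \leq \deg_x(v)$; if $v \notin \GG(I^s)$, then for any $v' \in \GG(I^s)$ with $v' \mid v$ the choice $k' := \max\{0,\,k - (\deg_x(v) - \deg_x(v'))\}$ makes $v'^{(k')}$ a proper divisor. Hence $\GG((I^x)^s) = \{u_i^{(k)} : 1 \leq i \leq N,\ 0 \leq k \leq a_i\}$, and I order these by $u_i^{(k)} > u_j^{(k')}$ iff $i < j$, or $i = j$ and $k < k'$.

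To verify the criterion, I would use the identity
\[
u_\alpha^{(k_\alpha)} : u_\beta^{(k_\beta)} = x^{\max\{0,\,(a_\alpha - k_\alpha) - (a_\beta - k_\beta)\}}\, y^{\max\{0,\, k_\alpha - k_\beta\}}\, \frac{\bar u_\alpha}{\gcd(\bar u_\alpha, \bar u_\beta)}.
\]
Take $A = u_\alpha^{(k_\alpha)} > B = u_\beta^{(k_\beta)}$ with $\deg(A:B) \geq 2$. If $\alpha = \beta$, then $A:B = x^{k_\beta - k_\alpha}$ and $u_\beta^{(k_\beta - 1)}$ is a witness whose colon with $B$ is $x$. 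If $\alpha \neq \beta$, linear quotients of $I^s$ produce $u_\gamma > u_\beta$ with $v_0 := u_\gamma : u_\beta$ a single variable dividing $u_\alpha : u_\beta$. When $v_0 \neq x$, we have $a_\gamma \leq a_\beta$ and $\bar u_\gamma/\gcd(\bar u_\gamma, \bar u_\beta) = v_0$, so setting $k_\gamma := \min\{k_\beta, a_\gamma\}$ the formula gives $u_\gamma^{(k_\gamma)} : u_\beta^{(k_\beta)} = v_0$, which divides $A:B$ since $v_0 \mid \bar u_\alpha/\gcd(\bar u_\alpha, \bar u_\beta)$.

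The main obstacle is the case $v_0 = x$, where $a_\gamma = a_\beta + 1$ and $\bar u_\gamma \mid \bar u_\beta$. Here $a_\alpha > a_\beta$, which forces at least one of $a_\alpha - k_\alpha > a_\beta - k_\beta$ or $k_\alpha > k_\beta$ (if both failed they would combine to give $a_\alpha \leq a_\beta$), so $x \mid A:B$ or $y \mid A:B$. When $x \mid A:B$, take $u_\gamma^{(k_\beta)}$: since $\bar u_\gamma \mid \bar u_\beta$ and $a_\gamma - k_\beta = (a_\beta - k_\beta) + 1$, the formula yields colon $x$. When $y \mid A:B$, take $u_\gamma^{(k_\beta + 1)}$, well defined since $k_\beta + 1 \leq a_\beta + 1 = a_\gamma$; as $a_\gamma - (k_\beta + 1) = a_\beta - k_\beta$, the formula yields colon $y$. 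In every case one finds a strictly larger generator whose colon with $B$ is a single variable dividing $A:B$, establishing linear quotients of $(I^x)^s$.
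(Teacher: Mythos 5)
Your proposal is correct and follows essentially the same route as the paper: the same description of $\GG((I^x)^s)$, the same ordering (inserting $u(y/x)^k$ immediately after $u$ in increasing $k$), and the same reduction to the linear quotients property of $I^s$ applied to the $x$-ified monomials, with the witness's $x$- and $y$-exponents then adjusted. Your explicit colon formula and the uniform choice $k_\gamma=\min\{k_\beta,a_\gamma\}$ merely streamline the paper's subcase analysis (its Cases 1 and 2, including the forms (a)/(b) and the $z=x$ subcase).
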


\begin{proof}
Fix a variable  $x$. If $x$ does not appear in $\GG(I)$ then $I^x=I$ and we are done by assumption, so we assume $x$ divides a generator of $I$. Let  $y$ be the duplicate of $x$ in $I^x$. Note that, for each $u\in \GG(I^s)$ with $\deg_x(u)=d>0$, the following $d$ monomials obtained from $u$ 
\begin{equation}\label{eq:new generator}
  \frac{\,u\,}{x}\cdot y,
  \frac{\,u\,}{x^2}\cdot y^2, \dots ,
  \frac{\,u\,}{x^d}\cdot y^d 
\end{equation}
are  the elements of $\GG((I^x)^s)$ which are not  in $\GG(I^s)$. 
 Order the generators of $(I^x)^s$ by extending a linear quotients order  of $I^s$  in the following way: If $u\in \GG(I^s)$ with $\deg_x(u)>0$,  insert the new generators obtained from $u$ immediately following $u$ in the order they appear in (\ref{eq:new generator}).

Let $w_1>\cdots > w_q$ denote the generators of $(I^x)^s$ in this order.  We claim that this  is a linear quotients order of $(I^x)^s$.  
Let $t\geq 2$ and $$J:=(w_1,\dots ,w_{t-1}):w_t.$$ Consider $w_{\ell}:w_t$ for $\ell<t$.

\subsection*{CASE 1:} Suppose $w_t \in \GG (I^s)$, i.e.,  $w_t$ is not divisible by $y$. 

$\cdot$ If $w_{\ell} \in  \GG (I^s)$, we are done since $I^s$ has linear quotients. 

$\cdot$ If $w_{\ell} \notin \GG (I^s)$, then $w_{\ell}$ is divisible by $y$, which implies that $w_{\ell}:w_t\in (y)$. 

\begin{itemize}
\item[-] If $x \in \GG(J)$, there exists $w_j \in \GG (I^s)$ with $j<t$ such that $w_j:w_t=x$. Let $w_k= w_j y/x$. Notice that $j<k< t$ by the definition of the order on $\GG ((I^x)^s)$. Thus, we have $w_k:w_t=y \in \GG(J)$, as desired. 

\item[-] If $x \notin \GG(J)$, then let  $\deg_x (w_{\ell})=\alpha_x\geq 0$ and $ \deg_y (w_{\ell})=\alpha_y\geq 1$. Then $w_{\ell}=x^{\alpha_x} y^{\alpha_y} M$  for a monomial $M$. Consider the monomial  $w_k=x^{\alpha_x+\alpha_y}M \in \GG(I^s)$ and notice that  $k<{\ell}$ by the definition of the order. Since $I^s$ has linear quotients,  there exists $w_j \in \GG (I^s)$ with $j<t$ and a variable $z$ such that $z=(w_j:w_t)$ and $z \mid (w_k:w_t)$. Note that  $z\neq x$ since $x \notin \GG(J)$, and $z\neq y$ since $w_j,w_t \in \GG (I^s)$. 
Therefore, $w_{\ell}:w_t$ is divisible by $z$, and hence $J$ is linear. 
\end{itemize}

\subsection*{CASE 2:} Suppose $w_t \notin \GG (I^s)$, i.e.,  $w_t$ is divisible by $y$.

Our first goal is to show  $x \in \GG(J)$. Set $w_j= w_tx/y$. Then $j<t$ by the definition of the order. Thus, $w_j:w_t=x \in \GG(J)$.

Let  $w_{\ell}=x^{\alpha_x} y^{\alpha_y} M$ for a monomial $M$ where $\alpha_x=\deg_x (w_{\ell})$ and $\alpha_y=\deg_y(w_{\ell})$. Similarly, let $w_t=x^{\beta_x} y^{\beta_y} N$ for a monomial $N$ where 
$\beta_x=\deg_x (w_t)$ and $\beta_y=\deg_y(w_t)\geq 1$. 
 Consider the  monomials $w_L= x^{\alpha_x+\alpha_y} M \text{ and } w_T=x^{\beta_x+\beta_y} N$ in $\GG(I^s)$  
where $L \le {\ell}$ and $T<t$ by the definition of the order.

If $\alpha_x>\beta_x$, then  $x\in \GG(J)$ divides $w_{\ell}:w_t$, and we are done. Suppose $\alpha_x\leq \beta_x$. This implies that ${\ell}<T$  by the definition of the order. Since $I^s$ has linear quotients and $L<T$,  there exists $w_j \in \GG(I^s)$ with $j<T$ such that $w_j:w_T=z$ for some variable $z$ dividing $w_L:w_T$. Note that $z \ne y$. Let $\alpha=\alpha_x+\alpha_y$ and $\beta=\beta_x+\beta_y$.

\textit{Case 2.1:} Suppose  $\alpha \leq \beta$. In this case,     $w_L:w_T=M:N$ which implies that $w_{\ell} :w_t$ is divisible by $z$. It remains to show $z \in \GG(J)$.  Since $w_j:w_T=z$, the monomial $w_j$ has one of the following forms: 
\begin{enumerate}
    \item[(a)]  $w_j= x^{\beta-1} z N$ or
    \item[(b)] $w_j= x^{\beta} N z/z'$ for a variable $z'$ dividing $N$ where $z'\neq z$.
\end{enumerate}

If (a) holds,  let $w_{k} = x^{\beta_x} y^{\beta_y-1} zN$ where $j\leq k <T<t$ by the definition of the order. Thus, $w_{k}:w_t=z\in \GG(J)$, as desired. Similarly, if (b) holds, let $w_{k}=x^{\beta_x} y^{\beta_y} Nz/z'$  where $j< k<T<t$ by the definition of the order. Hence, $w_{k}:w_t=z\in \GG(J)$.

\textit{Case 2.2:} Suppose $\alpha >  \beta$.  It then follows from the assumption $\alpha_x\leq \beta_x$ that $\alpha_y>\beta_y$.  In this case,    $w_{\ell}:w_t= y^{\alpha_y-\beta_y} (M:N)$ and  $w_L:w_T=x^{\alpha -\beta} (M:N)$. So, $w_{\ell}:w_t$  is divisible by both $y$ and $z$. It remains to show either $y \in \GG(J)$ or $z \in \GG(J)$.  Notice that it is possible to have $z=x$. 

Suppose $z=x$. Then $w_j= x^{\beta+1} N/z'$ for some variable $z'$ dividing $N$ since $w_j:w_T=z$.  Let $w_k= x^{\beta_x}y^{\beta_y+1} N/z'$. Note that   $j<k<T<t$ by the definition of the order and $w_k:w_t= y\in \GG(J)$.

Suppose $z \neq x$. This implies that $z$ divides $M:N$. In this case, $w_j$ is in one of the forms described in (a) or (b) from Case 2.1. The remainder of the proof follows similarly and it results with $z\in \GG(J)$.
\end{proof}

\section{Expansion, and linear quotients of powers of edge ideals}\label{sec.dup}

For the remainder of the paper, we focus our attention on edge ideals of graphs.
Let $G$ be a finite graph on $V(G) = \{1, \ldots, n\}$ with $E(G) = \{e_1, \ldots, e_s\}$, and let $S = K[x_1, \ldots, x_n]$ be the polynomial ring in $n$ variables over a field $K$.  For an edge $e = \{i,j\} \in E(G)$, we define $u_e = x_i x_j \in S$.  Recall that the edge ideal of $G$ is the ideal $I(G)$ of $S$ which is generated by those monomials $u_e$ with $e \in E(G)$.  To simplify the notation, unless misleading, we write $e$ instead of $u_e$.  For example, $e_1^3, e_1e_2^2 \in I(G)^3$. 
In addition, we may write $ij\in E(G)$ instead of $\{i,j\} \in E(G)$. Lastly, we may refer to and use the variables $x_1,\ldots, x_n$ as vertices of $G$.

\begin{Definition}\label{d:duplication-expansion}
Let $G$ be a finite graph and fix a vertex $x$ of $G$.  
\begin{itemize}

\item The graph $G^x$, called the \emph{duplication} of $G$ at $x$, is a finite graph obtained by adding a new vertex $y$ to $G$ with $$E(G^x) = E(G) \cup \{yb : xb \in E(G)\}.$$  
We say that $G^x$ is obtained from $G$ by \emph{duplicating} the vertex $x$ of $G$.  
Note that $xy \not\in E(G^x)$, and also
$I(G^x)=I(G)^x$ in the sense of \cref{d:duplication-new}. 
We call the new vertex $y$ the \emph{duplicate} of $x$ in $G^x$.

\item The graph $G^{[x]}$, called the \emph{expansion} of $G$ at $x$, is the graph with vertex set $V(G^{[x]})=V(G^x)$ and $E(G^{[x]})=E(G^x)\cup \{xy\}$.

\item  For any $W\subseteq V(G^{[x]})=V$ and any monomial $m\in I(G^{[x]})^s$, define $m_W$ to be the monomial formed by localizing $m$ at $W$. That is, if $m=\prod_{v_i\in V}v_i^{s_i}$, then $m_W = \prod_{v_i\in W}v_i^{s_i}$. 
\end{itemize}
\end{Definition}

The following example illustrates the definitions above.

\begin{Example}
Let $G$ be the graph $C_4$ labeled as in the left hand graph below. Then $G^x$ is the central graph and $G^{[x]}$ is the graph on the right. Consider the monomial $m=(xy)^2(xa)(bc) \in I(G^{[s]})^4$. If $W=\{x,c\}$, then $m_W =x^3c.$

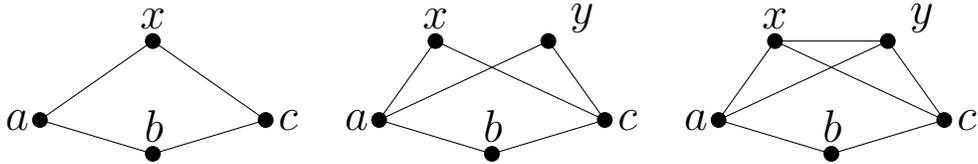
\begin{figure}[h]
\centering
\begin{tabular}{ccc}
\begin{tikzpicture}[scale=1.5]
\coordinate (x) at (0,1){};
\coordinate (a) at (-1,0.3){};
\coordinate (b) at (0,0){};
\coordinate (c) at (1,0.3){};

\fill(x)circle(0.7mm);
\fill(a)circle(0.7mm);
\fill(b)circle(0.7mm);
\fill(c)circle(0.7mm);

\draw(x)--(a)--(b)--(c)--cycle;

\draw(0,1.2)node{{\Large $x$}};
\draw(-1.2,0.3)node{{\Large $a$}};
\draw(1.2,0.3)node{{\Large $c$}};
\draw(0.01,0.25)node{{\Large $b$}};
\end{tikzpicture}

&

\begin{tikzpicture}[scale=1.5]
\coordinate (x) at (-0.5,1){};
\coordinate (y) at (0.5,1){};
\coordinate (a) at (-1,0.3){};
\coordinate (b) at (0,0){};
\coordinate (c) at (1,0.3){};

\fill(x)circle(0.7mm);
\fill(y)circle(0.7mm);
\fill(a)circle(0.7mm);
\fill(b)circle(0.7mm);
\fill(c)circle(0.7mm);

\draw(x)--(a)--(b)--(c)--cycle;
\draw(y)--(a);
\draw(y)--(c);

\draw(-0.5,1.2)node{{\Large $x$}};
\draw(0.8,1.2)node{{\Large $y$}};
\draw(-1.2,0.3)node{{\Large $a$}};
\draw(1.2,0.3)node{{\Large $c$}};
\draw(0.01,0.25)node{{\Large $b$}};
\end{tikzpicture}

&

\begin{tikzpicture}[scale=1.5]
\coordinate (x) at (-0.5,1){};
\coordinate (y) at (0.5,1){};
\coordinate (a) at (-1,0.3){};
\coordinate (b) at (0,0){};
\coordinate (c) at (1,0.3){};

\fill(x)circle(0.7mm);
\fill(y)circle(0.7mm);
\fill(a)circle(0.7mm);
\fill(b)circle(0.7mm);
\fill(c)circle(0.7mm);

\draw(x)--(a)--(b)--(c)--cycle;
\draw(y)--(a);
\draw(y)--(c);
\draw(x)--(y);

\draw(-0.5,1.2)node{{\Large $x$}};
\draw(0.8,1.2)node{{\Large $y$}};
\draw(-1.2,0.3)node{{\Large $a$}};
\draw(1.2,0.3)node{{\Large $c$}};
\draw(0.01,0.25)node{{\Large $b$}};
\end{tikzpicture}

\end{tabular}
\caption{Graph $G$, $G^x$ and $G^{[x]}$}
\end{figure}
\end{Example}

We state some useful facts regarding the duplicated vertex $x$ and its duplication $y$. The proofs are straightforward from the definitions and so are not included.  First, recall that 
$$N_G(x)=\{ z \in V(G) : xz \in E(G)\} 
\qand
N_G [x]= N_G(x) \cup \{x\}.$$

\begin{Lemma}\label{l:basics}
    Let $G$ be a gapfree graph. Using the notation of \Cref{d:duplication-expansion},
    \begin{itemize}
        \item $N_{G^x}(x) = N_{G^x}(y)=N_G(x) = N_{G^{[x]}}\setminus \{y\}$.
        \item $N_{G^{[x]}}[x]=N_{G^{[x]}}[y]$.
        \item $G^{[x]}$ is gapfree if and only if $V(G) \setminus N_G[x]$ is an independent set.
    \end{itemize}
\end{Lemma}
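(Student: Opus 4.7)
The plan is to treat each of the three bullets separately and then focus effort on the third one, which is the only nontrivial claim.

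For the first two bullets I would simply unwind the definitions. By construction, $y$ is joined in $G^x$ exactly to the $G$-neighbors of $x$, so $N_{G^x}(y)=N_G(x)$, while $x$ acquires no new neighbors in passing from $G$ to $G^x$ since $xy\notin E(G^x)$, giving $N_{G^x}(x)=N_G(x)$. Adding the edge $xy$ to form $G^{[x]}$ augments both $N_{G^x}(x)$ and $N_{G^x}(y)$ by, respectively, $y$ and $x$, so $N_{G^{[x]}}(x)\setminus\{y\}=N_G(x)$ and $N_{G^{[x]}}[x]=N_G(x)\cup\{x,y\}=N_{G^{[x]}}[y]$. Both bullets are thus immediate.

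For the third bullet I would handle the two implications separately. For $(\Rightarrow)$ I argue by contrapositive: suppose $u,v\in V(G)\setminus N_G[x]$ with $uv\in E(G)$. Then $\{u,v\}$ and $\{x,y\}$ are disjoint edges of $G^{[x]}$, and any candidate connecting edge must be of the form $\{x,w\}$ or $\{y,w\}$ with $w\in\{u,v\}$; by the first bullet, each of these is an edge of $G^{[x]}$ iff $w\in N_G(x)$, which is ruled out by $u,v\notin N_G[x]$. Hence $G^{[x]}$ is not gapfree.

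For $(\Leftarrow)$ I assume $V(G)\setminus N_G[x]$ is independent and take two disjoint edges $e,e'$ of $G^{[x]}$. Since at most one of them can contain $y$, I would run three cases. If $y\notin e\cup e'$, both edges lie in $G$ and a connecting edge is furnished by the gapfreeness of $G$ (and remains in $G^{[x]}$). If $e=\{x,y\}$, then $e'\in E(G)$ with $x\notin e'$; the hypothesis prevents both endpoints of $e'$ from lying in $V(G)\setminus N_G[x]$, so some endpoint $c$ of $e'$ lies in $N_G(x)$ and $\{x,c\}$ connects $e$ and $e'$. If $e=\{y,b\}$ with $b\in N_G(x)$, then either $x\in e'$ (in which case $\{x,y\}$ itself works) or $\{x,b\}$ and $e'$ are disjoint edges of $G$; gapfreeness of $G$ then yields an edge $g\in E(G)$ meeting both, and if $x\in g$ I translate $g=\{x,c\}$ to $\{y,c\}\in E(G^{[x]})$, otherwise $b\in g$ and $g$ itself is a connecting edge.

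The only place any thought is required is pinpointing which subcase actually uses the independence hypothesis: it is exactly the subcase $e=\{x,y\}$, where the edge $\{x,y\}$ has no $G$-analogue to feed into the gapfreeness of $G$. The remaining bookkeeping consists of remembering the correspondence $\{x,c\}\in E(G)\iff\{y,c\}\in E(G^{[x]})$ provided by the first bullet, which is what lets us transport connecting edges through $x$ to connecting edges through $y$ whenever needed.
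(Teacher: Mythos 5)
Your proof is correct. The paper itself omits the argument, stating only that it is ``straightforward from the definitions,'' and your write-up is exactly that routine verification: the first two bullets by unwinding \Cref{d:duplication-expansion}, and the third by a complete case analysis in which the independence hypothesis is used precisely where the edge $\{x,y\}$ has no counterpart in $G$, with connecting edges transported between $x$ and $y$ via $N_{G^{[x]}}(x)\setminus\{y\}=N_{G^{[x]}}(y)\setminus\{x\}=N_G(x)$.
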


We are now ready to prove the main theorem of this section.

\begin{Theorem}\label{t:complete}
    Let $G$ be a graph such that $I(G)^s$ has linear quotients for some $s \geq 1$. Fix $x\in V(G)$. Then $I(G^{[x]})^s$ has linear quotients if and only if $G^{[x]}$ is gapfree.
\end{Theorem}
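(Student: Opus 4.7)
The forward direction is immediate: if $I(G^{[x]})^s$ has linear quotients then, being equigenerated, it has a linear resolution by \cite[Proposition 8.2.1]{HHgtm260}, and the Francisco--H\`a--Van Tuyl observation recalled in the introduction then forces $G^{[x]}$ to be gapfree.

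For the converse, assume $G^{[x]}$ is gapfree; by \Cref{l:basics}, the set $V(G) \setminus N_G[x]$ is independent, so every edge of $G$ meets $N_G[x]$. The hypothesis together with \Cref{p:duplication} provides a linear quotients order $u_1 > u_2 > \cdots > u_r$ of $\GG(I(G^x)^s)$. Using the decomposition $I(G^{[x]})^s = \sum_{k=0}^s (xy)^k \, I(G^x)^{s-k}$, every $M \in \GG(I(G^{[x]})^s)$ admits a representation $M = (xy)^k w$ with $w \in \GG(I(G^x)^{s-k})$; set $k(M)$ to be the minimum such $k$, so that $k(M) = 0$ exactly when $M \in \GG(I(G^x)^s)$. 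The plan is to extend the above order by appending all new generators (those with $k(M) \geq 1$) after $u_r$, sorted by increasing $k(M)$ with an appropriate secondary tie-break inside each $k$-layer.

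The bulk of the argument will be verifying the criterion of \Cref{l:lq-criterion} for this extended order. For a new generator $M = (xy)^{k(M)} e_1 \cdots e_{s-k(M)}$ and any earlier $M'$ with $\deg(M':M) \geq 2$, one must produce, for each variable $z$ dividing $M':M$, a witness $M'' < M$ with $M'':M = z$. The plan is to modify one factor in the above factorization: when $z \in N_G(x) \cup \{y\}$, swap an $(xy)$-factor (available since $k(M) \geq 1$) for the edge $xz$ or $yz$, yielding a witness with $k(M'') \leq k(M)-1$; when $z \notin N_G[x]$, the gapfree hypothesis guarantees that any edge of $G$ containing $z$ has its other endpoint in $N_G[x]$, so an analogous edge swap (of a factor $\{z,b\}$ with $b \in N_G[x]$ in some factorization of $M$) produces the witness.

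The hardest part will be the bookkeeping of subcases, particularly when $z \in \{x, y\}$, where the direct swap can fail, and when both $M$ and $M'$ are new generators with close $k$-values. In these situations one exploits the twin structure of $x$ and $y$ in $G^{[x]}$ (they share the same closed neighborhood by \Cref{l:basics}) to migrate an $(xy)$-factor through the factorization before performing the swap, using that $(xa)(ya)$ and $(xy)a^2$ are equal as monomials for every $a \in N_G(x)$. Once the witnesses are constructed with $k(M'') \leq k(M)$ and the tie-break within each $k$-layer is chosen compatibly, \Cref{l:lq-criterion} will yield the desired linear quotients property.
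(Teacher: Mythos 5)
Your plan follows the same skeleton as the paper's proof (your $k(M)$ is exactly the paper's invariant $\mu(w)$, the order is built on top of the \Cref{p:duplication} order for $I(G^x)^s$, and the verification goes through \Cref{l:lq-criterion}), but there is a genuine gap: the ``appropriate secondary tie-break inside each $k$-layer'' is never specified, and that is precisely where the difficulty of the theorem sits. In the paper the order within a $\mu$-layer is refined first by $\deg(w_Z)$ (with $Z=\{x,y\}$), then by $|\deg_x(w)-\deg_y(w)|$, and finally by a lexicographic order on the part of $w$ supported on $B=V(G^{[x]})\setminus N_{G^{[x]}}[x]$, and each refinement is forced by a specific subcase of the colon computation. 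The hardest subcase --- two generators $v>w$ with equal $\mu$, equal $A$-part and equal $Z$-part, so that $v:w$ involves only $B$-variables --- cannot be settled by a single factor swap: replacing one $B$-endpoint of a factor of $w$ by another $B$-vertex need not move you earlier in any fixed order, and your $(xa)(ya)=(xy)a^2$ trick is unavailable since $x,y$ play no role there. The paper handles this by taking $p$ to be the $>_B$-greatest $B$-variable dividing $v:w$ and running a cascading chain of swaps (Steps 1--3, up to $t$ steps) that terminates only because the displaced $B$-labels are forced to be $>_B p$; the witness appears only at the end of the chain. Your proposal compresses this to ``bookkeeping of subcases'' plus a tie-break ``chosen compatibly,'' which is circular: whether a swapped monomial precedes $M$ depends on the tie-break, and whether some tie-break makes all the swaps work is exactly what has to be proved.

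Two further points. First, you promise a witness $M''$ with $M'':M=z$ for \emph{every} variable $z$ dividing $M':M$; \Cref{l:lq-criterion} only asks for one such variable, and the stronger claim is false in the equal-$\mu$, equal-$Z$ situation above --- a single-swap witness exists only for a carefully chosen $z$ (the $>_B$-greatest one), via the chain argument. Second, when $z\notin N_G[x]$ you cannot simply ``swap in'' an edge $\{z,b\}$: you must know that $\{z,b\}$ is genuinely an edge of $G^{[x]}$ and that the resulting monomial is earlier in the order; in the paper this edge is borrowed from a factorization of the earlier generator $v$ (e.g.\ $f_1=ab$, $g_1=az$ in its Case 2.1), not produced by gapfreeness alone, and the reduction to $v_A=w_A$ rests on a degree count (no factor of a minimal presentation lies inside $A=N_G(x)$, hence $\deg(v_A)\ge\deg(w_A)$ when $\mu(v)\le\mu(w)$) that your sketch omits. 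So the approach is the right one, but the proof of the key case is missing rather than merely compressed.
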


\begin{proof}
First note that if $I(G^{[x]})^s$ has linear quotients for some $s\geq 1$ then $G^{[x]}$ is gapfree, thus we focus on the converse.

For any monomial generator $w$ of $I(G^{[x]})^s$ 
define
\[\mu(w)=\min \left\{i:  \frac{w}{(xy)^i} \in \mathcal{G}(I(G^x)^{s-i})\right\}.\]
Note that $\mu(w)=0$ if and only if $w$ is a minimal generator of $I(G^x)^s$. Set $V=V(G^{[x]})$. Partition $V$ into sets:
\begin{eqnarray*}
    Z &=& \{x,y\} \\
    A &=& N_G(x) \\
    B &=& V(G^{[x]})\setminus N_{G^{[x]}}[x]
\end{eqnarray*}
By \Cref{l:basics}, $B$ is an independent set. Fix an order $>_B$ on the elements of $B$.  By Proposition \ref{p:duplication}, there is a linear quotients order $w_1,w_2,\dots ,w_q$ for $I(G^x)^s$. We  extend this order to a linear quotients order $w_1,\dots ,w_q,w_{q+1},\dots ,w_r$ for $I(G^{[x]})^s$. To simplify the notation, we adopt the convention that $w_1>w_2>\dots >w_r$. 
For generators $w$ and $w'$ of $I(G^{[x]})^s$ not belonging to $I(G^x)^s$,
set $w>w'$ if:
\begin{enumerate}
    \item $\mu(w) < \mu(w')$, or
    \item $\mu(w)=\mu(w')$ and $\deg(w_Z)<\deg(w'_Z)$, or
    \item $\mu(w)=\mu(w')$, $\deg(w_Z)=\deg(w'_Z)$, and $|\deg_x(w)-\deg_y(w)|<|\deg_x(w')-\deg_y(w')|$, or
    \item $\mu(w)=\mu(w')$, $\deg(w_Z)=\deg(w'_Z)$, $|\deg_x(w)-\deg_y(w)|=|\deg_x(w')-\deg_y(w')|$, and $w_{B}>w'_{B}$ in the lexicographic ordering on $B$, using the order $>_B$ on the elements of $B$ fixed above.
\end{enumerate}

Let $w=w_i$ for some $i\geq 2$. We  show that $J=(w_1,\dots ,w_{i-1}):w_i$ is generated by variables. We may assume that $\mu(w)\neq 0$ by the definition of the order and Proposition \ref{p:duplication}. 
Since $B$ is an independent set, the edges of $G^{[x]}$ can be partitioned as follows:
\begin{eqnarray*}
    E_1 &=& \{xy\}\\
    E_2 &=& \{ab \mid a \in A, b\in B\}\\
    E_3 &=& \{az \mid a \in A, z\in Z\}\\
    E_4 &=& \{aa' \mid a, a' \in A\}.
\end{eqnarray*}
We make two observations which will be used in the proofs.
\begin{itemize}
    \item[(i)]\label{rule 1} Let $w$ be a generator of $I(G^{[x]})^s$
    with $\mu(w)\neq 0$. For any representation $$w=(xy)^{\mu(w)}e_1\dots e_{s-\mu(w)}$$ where  $e_1,\dots ,e_{s-\mu(w)}$ are in $E(G^x)$, we have $e_j\notin E_4$ for all $j=1,\dots ,s-\mu(w)$.
    \item[(ii)]\label{rule 2} $A$ is a subset of the set of generators of $J$.   
\end{itemize}
\textit{Proof of (i).} Assume to the contrary that $e_1=a_1a_2$ for some $a_1,a_2\in A$. Then we can write
\[w=(xy)^{\mu(w)-1}(xa_1)(ya_2)e_2\dots e_{s-\mu(w)}\]
which is a contradiction to the minimality of $\mu(w)$. 

\textit{Proof of (ii).} Let $a\in A$. Then $u:=\frac{w}{xy}(xa)$ is a minimal generator of $I(G^{[x]})^s$ with $\mu(u)\leq \mu(w)-1$. Therefore $u>w$ and $u:w=a\in J$, proving the claim.

\medskip

Let $v>w$. We show that there is a variable generator of $J$ dividing $v:w$.


\noindent\textbf{CASE 1:} Suppose that $\mu(v)<\mu(w)$. Using observation (i), each $e_i$ in the representation of $w$ either has precisely one endpoint in $A$ or is in $E_1$. It follows that
\[\deg(v_A) \geq s-\mu(v)>s-\mu(w)=\deg(w_A).\]
Therefore, there is an $a\in A$ such that $a$ divides $v_{A}:w_{A}$. Since $v_{A}:w_{A}$ divides $v:w$, the proof follows from observation (ii).


\noindent\textbf{CASE 2:}  Suppose that $1\leq \mu(v)=\mu(w)$. As above, from observation (i) it follows that 
\[\deg(v_{A})=s-\mu(v)=s-\mu(w)=\deg(w_{A}).\]

Now by observation (ii), we may assume for every $a\in A$ we have $\deg_a(v) \le \deg_a(w)$. However, since
$$\deg(v_A) = \sum_{a\in A} \deg_a(v) \leq \sum_{a\in A} \deg_a(w) = \deg(w_A)$$
the inequality cannot be strict for any $a\in A$. Thus
for the remainder of the proof, we assume that $v_{A}=w_{A}$.

\textit{Case 2.1:} Suppose that $\deg(v_Z)<\deg(w_{Z})$. Let $\gamma=\mu(v)=\mu(w)$. By observation (i) we may represent $v=(xy)^{\gamma}e_1\dots e_m f_1\dots f_k$ and $w=(xy)^{\gamma}g_1\dots g_{m'} h_1\dots h_{k'}$ with $e_j, g_j\in E_3$ and $f_j, h_j\in E_2$ for all $j$. Note that 
\[2\gamma+m=\deg(v_Z)<\deg(w_Z)=2\gamma+m'.\]

On the other hand, since $m+k=m'+k'$, it follows that $k>k'$. Therefore
\[\deg((f_1\dots f_k)_A)=k>k'=\deg((h_1\dots h_{k'})_A).\]
Then since $v_A=w_A$, there exists $a \mid (f_1\dots f_k)_A$ such that $a \mid (g_1\dots g_{m'})_A$. Without loss of generality, we may assume that $f_1=ab$ for some $b\in B$ and $g_1=az$ for some $z\in Z$. If $b\mid (v:w)$, set $w':=\frac{w}{g_1}f_1 = \frac{w}{z}b$.  Then $w'>w$ with $w':w=b$, which completes the proof. 

Assume that $b\nmid (v:w)$. Then, without loss of generality, we may assume that $h_1=a'b$ for some $a'\in A$. Then we can re-write $w$ as follows:
\[w=(xy)^{\gamma}(a'z)g_2\dots g_{m'}(ab)h_2\dots h_{k'}.\]
Then, the same argument can be applied recursively to $v/(ab)$ and $w/(ab)$.

\textit{Case 2.2:} Suppose that $\deg(v_Z)=\deg(w_Z)$. 

\textit{Case 2.2-(i):} Suppose that $|\deg_x(v)-\deg_y(v)|<|\deg_x(w)-\deg_y(w)|$. Since the right side of the inequality cannot be $0$, 
without loss of generality, assume that $\deg_x(w)>\deg_y(w)$. Then since $\deg(v_Z)=\deg(w_Z)$, we have $\deg_y(v)>\deg_y(w)$ and $y $ divides $ v:w$. 

Since $\deg_x(w)>\deg_y(w)$ there is an $a\in A$ with $e_1=ax$ in the representation of $w$. By the definition of $G^{[x]}$, $a \in N_{G^{[x]}}(y)$ also. Set $w':= \frac{w}{ax}ay$.  If $|\deg_x(w')-\deg_y(w')|=|\deg_x(w)-\deg_y(w)|$, then $|\deg_x(w)-\deg_y(w)|=1$ and $|\deg_x(v)-\deg_y(v)| = 0$. In this case, $\deg(w_Z)$ is odd and $\deg(v_Z)$ is even, which contradicts $\deg(v_Z)=\deg(w_Z)$.
Thus $|\deg_x(w')-\deg_y(w')|<|\deg_x(w)-\deg_y(w)|$, so $w'>w$ 
and $w':w=y$.

\textit{Case 2.2-(ii):} Suppose that $|\deg_x(v)-\deg_y(v)|=|\deg_x(w)-\deg_y(w)|$. Assume first that $v_Z\neq w_Z$. Then without loss of generality, we may write
\begin{itemize}
    \item[] $\deg_x(v)=r_1=\deg_y(w)$
    \item[] $\deg_x(w)=r_2=\deg_y(v)$
\end{itemize}
for some integers $r_1>r_2$.  Then there exists an $e=ay\in E_3$ in the representation of $w$. Set $w':=\frac{w}{ay}ax=\frac{w}{y}x$. Then $w' >w$ and $x=w':w$ divides $v:w$. Therefore, we may assume $v_Z=w_Z$. Recall that the following statements are now assumed:
\begin{itemize}
    \item $1\leq \mu(v)=\mu(w)=\gamma$,
    \item $v_A=w_A$,
    \item $v_Z=w_Z$.
\end{itemize}


Fix presentations $v=(xy)^{\gamma}f_1f_2\cdots f_t$ and $w=(xy)^{\gamma}g_1g_2\cdots g_t$. For simplicity, we assume that $f_{i_1}\dots f_{i_m}\neq g_{j_1}\dots g_{j_m}$ for all $m$-fold products with $1\leq m\leq t$ since the argument below remains valid recursively after removing the common factors.

Fix $p$ be the greatest in the ordering fixed on $B$ such that $p \mid (v:w).$ We claim that there is $u>w$ such that $u:w=p$. After rearranging the indices, we may write $f_1=a_1p$ for some $a_1\in A$.

\textit{Step 1:} If there exists $\delta_1\in Z$ or $\delta_1\in B$ with $p>_B \delta_1$ such that $g_1=a_1\delta_1$ after rearrangement of indices, set 
\[u=\frac{w}{g_1}f_1=\frac{w}{(a_1\delta_1)}(a_1p)\]
Then $u>w$ in the ordering since $\deg(u_Z)< \deg(w_Z)$ in the first case and $u_B>w_B$ in the latter one, and $u:w=p$. Therefore, we may assume that $g_1=a_1\delta_1$ for some $\delta_1 \in B$ with $\delta_1>_B p$.

\textit{Step 2:} By the choice of $p$, we may set $f_2=a_2\delta_1$ for some $a_2\in A$. As before, if, up to reordering, $g_2=a_2\delta_2$ for some $\delta_2 \in Z$ or $\delta_2 \in B$ with $p>_B \delta_2$ we form the monomial
\[u=\frac{w}{g_1g_2}f_1f_2=\frac{w}{(a_1\delta_1)(a_2\delta_2)}(a_1p)(a_2\delta_1).\]
which satisfies the desired property. Therefore, we may assume that $\delta_2 \in B$ and $\delta_2>_B p$.

\textit{Step 3:} By the choice of $p$, we may set $f_3=a_3\delta_2$ for some $a_3\in A$. As before, if, up to reordering, $g_3=a_3\delta_3$ for some $\delta_3 \in Z$ or $\delta_3 \in B$ with $p>_B \delta_3$ we form the monomial
\[u=\frac{w}{g_1g_2g_3}f_1f_2f_3=\frac{w}{(a_1\delta_1)(a_2\delta_2)(a_3\delta_3)}(a_1p)(a_2\delta_1)(a_3\delta_2).\]
which satisfies the desired property. Therefore, we may assume $\delta_3 \in B$ and $\delta_3>_B p$. The process must stop after $k\le t$ steps when there is no $\delta_k\in B$ left with $\delta_k>_B p$. When the process stops, we have $u >w$ and $u:w=p$ as desired.
\end{proof}

\begin{Corollary}
    Let $G$ be a graph and assume $I(G)^s$ has linear quotients for some $s$. Let $x$ be a vertex of $G$ such that $V (G) \setminus N_G[x]$ is an independent set. Let $H$ be the graph obtained from $G$ by replacing $x$ with a clique. Then $(I(H))^s$ has linear quotients.
 \end{Corollary}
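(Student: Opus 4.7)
The plan is to obtain $H$ as an iterated expansion of $G$ at the vertex $x$, and then to apply Theorem \ref{t:complete} repeatedly. Let $k$ denote the size of the clique that replaces $x$ in $H$ (if $k=1$ there is nothing to prove). Set $G_0 = G$, and inductively let $G_{i+1} = G_i^{[x]}$ for $0 \le i \le k-2$. Each expansion adjoins a new vertex that is adjacent to $x$ and to every vertex of $N_{G_i}(x)$, and the induction will show that after $k-1$ expansions the vertices $x$ together with the $k-1$ duplicates form a clique, each having exactly $N_G(x)$ as their neighborhood outside the clique. Thus $G_{k-1} \cong H$.

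The main point to verify is that the hypotheses of Theorem \ref{t:complete} propagate through this iteration. First, since $I(G)^s$ has linear quotients, $I(G)^s$ has a linear resolution, and hence (by the observation of Francisco--H\`a--Van Tuyl recalled in the introduction) $G = G_0$ is gapfree. Combined with the assumption that $V(G) \setminus N_G[x]$ is an independent set, Lemma \ref{l:basics} gives that $G_1 = G^{[x]}$ is gapfree. Theorem \ref{t:complete} then yields that $I(G_1)^s$ has linear quotients.

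To carry out the induction, I will check that the two standing hypotheses needed for another application of Theorem \ref{t:complete}, namely that $G_i$ is gapfree and that $V(G_i) \setminus N_{G_i}[x]$ is independent, both persist. Gapfreeness of $G_i$ holds by the previous step together with Lemma \ref{l:basics}. For the second hypothesis, by construction
\[
V(G_{i+1}) = V(G_i) \cup \{y_{i+1}\} \qand N_{G_{i+1}}[x] = N_{G_i}[x] \cup \{y_{i+1}\},
\]
so
\[
V(G_{i+1}) \setminus N_{G_{i+1}}[x] \;=\; V(G_i) \setminus N_{G_i}[x] \;=\; \cdots \;=\; V(G) \setminus N_G[x],
\]
which is independent by hypothesis. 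Hence Lemma \ref{l:basics} guarantees that $G_{i+2} = G_{i+1}^{[x]}$ is again gapfree, and Theorem \ref{t:complete} applies to give that $I(G_{i+2})^s$ has linear quotients. After $k-1$ applications we conclude that $I(H)^s = I(G_{k-1})^s$ has linear quotients.

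There is no real obstacle here beyond bookkeeping; the substantive work is encapsulated in Theorem \ref{t:complete}. The only thing to be careful about is the verification that the expansion operation can indeed be iterated to produce the clique substitution, and that the ``independent outside neighborhood'' condition is preserved because the newly added vertex is itself absorbed into $N[x]$ at the next step.
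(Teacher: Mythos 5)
Your proposal is correct and is exactly the argument the paper intends: the corollary is stated as an immediate consequence of \Cref{t:complete}, obtained by iterating the expansion at $x$ and noting (via \Cref{l:basics}) that gapfreeness and the independence of $V(G)\setminus N_G[x]$ persist, since each new vertex is absorbed into $N[x]$ and no new edges appear among the old vertices. Your bookkeeping of these two standing hypotheses matches the paper's implicit proof, so there is nothing to add.
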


\begin{Remark}
  If $G$ is a (gapfree) graph and $x,y$ are vertices such that $N_G(x)\setminus \{y\} = N_G(y) \setminus \{x\}$, then \Cref{p:duplication} or \Cref{t:complete} can be used to simplify $G$ by identifying $x$ and $y$ prior to determining whether $I(G)^s$ has linear quotients for some $s$, potentially significantly reducing the number of generators that need to be ordered.
\end{Remark}

In the next two sections, we provide examples of gapfree graphs for which powers of their edge ideals have linear quotients starting from the second power. Applying the main results of this section to these graphs yields many additional examples of graphs whose powers have linear quotients.


\section{Ordering of pure powers} \label{sec.PureOrder}

Let $G$ be a graph with $E(G) = \{e_1, \ldots, e_s\}$ and suppose that a power $I(G)^q$ of the edge ideal $I(G)$ has linear quotients (which particularly implies that $G$ is gapfree) with an ordering
\begin{eqnarray*}
\Nc^{(q)} : u_1  > \cdots > u_r
\end{eqnarray*}
of the generators of $I(G)^q$, where pure powers appear in the ordering with 
\[
e_1^q > \cdots >e_s^q. 
\]
Then the ordering
\begin{eqnarray*}
\Nc^{(q+1)} : u_1 e_1 > \cdots > u_r e_1 > u_1 e_2 > \cdots > u_r e_2 > \cdots > u_r e_s    
\end{eqnarray*}
of the generators of $I(G)^{q+1}$, where $u_i e_j$ is omitted if $u_i e_j = u_k e_\ell$ with $\ell < j$, is called the \emph{efficient ordering} constructed from the ordering $\Nc^{(q)}$.  Recursively, we define the efficient ordering $\Nc^{(s)}$, constructed from the ordering $\Nc^{(q)}$, of the generators of $I(G)^{s}$ for $s \geq q$.

 The following is a refinement of Conjecture \ref{LinearQuotient} by explicitly describing the potential linear quotient ordering for higher powers of $I(G)$ coming from that of a given power. It provides a strong means of approaching Conjecture \ref{LinearQuotient}

\begin{Conjecture}
    \label{NewOrleans}
    Let $G$ be a graph and suppose that $I(G)^q$, for some $q \in \NN$, has linear quotients with an ordering $\Nc^{(q)}: u_1 > \dots > u_r$ of its generators. Then, $I(G)^s$ has linear quotients with the efficient ordering $\Nc^{(s)}$, constructed from $\Nc^{(q)}$, of its generators for all $s \ge q$.
\end{Conjecture}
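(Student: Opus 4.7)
The natural strategy is induction on $s$, reducing to the case $s=q+1$: if $\Nc^{(q)}$ is a linear quotients order for $I(G)^q$, then the efficient ordering $\Nc^{(q+1)}$ is one for $I(G)^{q+1}$; iterating yields the conjecture for every $s \ge q$. Fix, for every generator $w \in \Gc(I(G)^{q+1})$, its \emph{canonical representation} $w = u(w) \cdot e(w)$ dictated by the efficient ordering: $e(w) = e_{j(w)}$ is the smallest-indexed edge with $e_{j(w)} \mid w$ and $w/e_{j(w)} \in \Gc(I(G)^q)$, and $u(w) := w/e(w) \in \Gc(I(G)^q)$. In this language, $w > w'$ in $\Nc^{(q+1)}$ exactly when $j(w) < j(w')$, or $j(w) = j(w')$ and $u(w) > u(w')$ in $\Nc^{(q)}$.

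The plan is to verify \Cref{l:lq-criterion} directly. Fix a pair $w > w'$ with $\deg(w:w') > 1$, write $w = u e_a$ and $w' = u' e_b$ with $a \le b$, and seek $w'' > w'$ with $w'':w' = z$ a variable dividing $w:w'$. The case $a = b$ folds cleanly into the inductive hypothesis: there $u > u'$ in $\Nc^{(q)}$ and $w:w' = u:u'$, so linear quotients of $\Nc^{(q)}$ yield $u'' > u'$ with $u'':u' = z$ dividing $u:u'$, and $w'' := u'' e_a$ is then a generator of $I(G)^{q+1}$ whose canonical edge has index at most $a$; hence $w'' > w'$ in $\Nc^{(q+1)}$ and $w'' : w' = z$.

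The remaining case $a < b$ is where the real work lies. The plan is to split on the nature of a chosen variable $z$ dividing $w:w'$. If $z$ is a vertex of $e_a$, compare $w$ with the candidate $w^* := u' e_a$, whose canonical edge has index at most $a < b$ so that $w^* > w'$ automatically; one checks that $w^* : w'$ divides $e_a : e_b$, and reduces to matching $z$ with a vertex of $e_a$ not shared with $e_b$. When $e_a$ and $e_b$ are disjoint, the gapfree hypothesis on $G$ (forced by $I(G)^q$ having linear quotients) supplies a connecting edge $f$ that enables an analogous construction via $u' f$ or $u'' e_a$ for a suitable $u'' \in \Gc(I(G)^q)$. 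If instead $z$ divides $u$ rather than $e_a$, the strategy is to interpolate by considering the pair $(u e_a, u' e_a)$, reducing to the $a=b$ case just handled, and then transferring the resulting variable witness across the edge swap $e_a \leftrightarrow e_b$.

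The main obstacle is precisely this case $a < b$: the canonical factorization of $w'$ locks in $e_b$ as its smallest edge factor, so one cannot simply pull out $e_a$ from $w'$ and invoke linear quotients of $\Nc^{(q)}$ on the residues. Gapfreeness produces connecting edges only abstractly, and converting such a connection into a generator $w''$ with the exactly prescribed single-variable colon $w'' : w' = z$ and with $w'' > w'$ in $\Nc^{(q+1)}$ appears to require substantial combinatorial bookkeeping and may depend on finer structural features of $G$. This is consistent with the fact that the conjecture is proved in the paper only for pentagons and certain infinite families, and is reduced in \Cref{sec.Partial} to checking finitely many initial powers.
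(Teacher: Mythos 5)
The statement you are trying to prove is \Cref{NewOrleans}, which is an open \emph{conjecture} in the paper: the authors do not prove it in general, but only verify it for special graphs (\Cref{pentagon}, \Cref{p:SusanSelvi}, \Cref{l:pentagon_4inner_edges}, \Cref{pasadenaAIM}) and, under substantial extra hypotheses, reduce it to finitely many initial powers (\Cref{thm.LinQuotients}). So there is no paper proof to match your argument against, and your proposal, by your own admission, is not a complete proof either. What you have is correct up to the easy reduction: writing each generator in its canonical form $w = u(w)e_{j(w)}$ and disposing of the case $j(w)=j(w')$ via the inductive hypothesis is sound (and is essentially the same first step the authors take in the proof of \Cref{thm.LinQuotients}). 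The genuine gap is the case $a<b$, which is precisely where the whole difficulty of the conjecture lives, and your sketch does not close it. For instance, the candidate $w^{*}:=u'e_a$ only guarantees $w^{*}:w'$ divides $e_a:e_b$, which typically has degree $2$ when $e_a\cap e_b=\emptyset$, so it does not by itself produce the required single-variable colon; and gapfreeness hands you a connecting edge $f$ abstractly, but there is no argument showing that $u'f$ (or some $u''e_a$) is a generator that precedes $w'$ in $\Nc^{(q+1)}$ \emph{and} has colon exactly a variable dividing $w:w'$. Likewise, the proposed "interpolation" through the pair $(ue_a,u'e_a)$ silently assumes $u'e_a$ behaves well under the edge swap $e_a\leftrightarrow e_b$, which is exactly what must be proved.

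It is worth noting how much machinery the paper needs even for its partial result: \Cref{thm.LinQuotients} does not work with an arbitrary linear quotients order $\Nc^{(q)}$, but requires an \emph{admissible} edge ordering (\Cref{def.goodOrder}, \Cref{lem.GensOrder}) and the hypothesis that $I^2,\dots,I^7$ all have linear quotients with the compatible orders; the admissibility condition is used precisely to control the disjoint-edge case $ab\cap cd=\emptyset$, and the intersecting case still requires delicate constructions (Claims 1 and 2 there), including a case where the colon is a product of variables forming an independent set. None of these mechanisms appears in your plan, and without some substitute for them the step from $I(G)^q$ to $I(G)^{q+1}$ in the case $a<b$ remains unproved. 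In short: your reduction of the same-edge case is fine, but the heart of the conjecture is untouched, so the proposal does not establish the statement (nor could it be expected to, given that the statement is open).
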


Let $C_5$ be the pentagon on the vertex set $\{a,b,c,d,e\}$ with the edges 
$$e_1=ab, \, \, e_2=bc, \, \, e_3=cd, \, \, e_4=de, \, \, e_5=ea.$$  
One can easily prove directly that $I(C_5)^2$ has linear quotient with the following ordering, with omitting $>$, of its $15$ generators:
\[
a^2b^2, ab^2c, b^2c^2, abcd, bc^2d, c^2d^2, abce, a^2be, abde, bcde, cd^2e, acde, ade^2, a^2e^2, d^2e^2.
\]
By using $e_1, e_2, e_3, e_4, e_5$, the above ordering is 
\begin{eqnarray}
\label{Istanbul}
e_1^2, e_1e_2, e_2^2, e_1e_3, e_2e_3, e_3^2, e_2e_5, e_1e_5, e_1e_4, e_2e_4, e_3e_4, e_3e_5, e_4e_5, e_5^2, e_4^2.
\end{eqnarray}
The ordering of pure powers is
\[
e_1^2 > e_2^2 > e_3^2> e_5^2 > e_4^2.
\]

We show that Conjecture \ref{NewOrleans} is true for the pentagon with $q_0 = 2$.

\begin{Proposition}
\label{pentagon}
All powers $I(C_5)^q$ with $q \geq 2$ have linear quotients.
\end{Proposition}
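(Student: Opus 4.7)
The plan is to establish the stronger statement that the efficient ordering $\Nc^{(s)}$, built recursively from $\Nc^{(2)}$ in (\ref{Istanbul}), is a linear quotient ordering for $I(C_5)^s$ for every $s \geq 2$; this simultaneously verifies Conjecture~\ref{NewOrleans} for $C_5$ with $q_0 = 2$. I would proceed by induction on $s$, with base case $s = 2$ handled by a direct application of Lemma~\ref{l:lq-criterion} to the fifteen generators listed in (\ref{Istanbul}).

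For the inductive step, suppose $I(C_5)^s$ has linear quotients with respect to $\Nc^{(s)}$, and let $M > N$ be two generators of $I(C_5)^{s+1}$ in $\Nc^{(s+1)}$ with $\deg(M:N) \geq 2$. By Lemma~\ref{l:lq-criterion}, it suffices to exhibit a generator $L$ with $L > N$ in $\Nc^{(s+1)}$ such that $L:N$ is a single variable dividing $M:N$. Each generator $w \in \GG(I(C_5)^{s+1})$ has a well-defined block index
\[
j(w) := \min\bigl\{\, j : w = u \cdot e_j \text{ for some } u \in \GG(I(C_5)^s) \,\bigr\},
\]
where the minimum is taken with respect to the sequence of pure-power multipliers dictated by $\Nc^{(2)}$. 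The ordering $\Nc^{(s+1)}$ then lists generators in increasing order of $j(w)$, with ties broken by applying $\Nc^{(s)}$ to the residual factor $w/e_{j(w)}$. Writing $M = M'e_j$ and $N = N'e_k$ using these minimal block representations yields two main cases.

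If $j = k$, then $M', N' \in \GG(I(C_5)^s)$ with $M' > N'$ in $\Nc^{(s)}$ and $M':N' = M:N$; the inductive hypothesis supplies $L' > N'$ with $L':N' = z$ a variable dividing $M:N$, and $L := L'e_j$ delivers the desired comparison in $\Nc^{(s+1)}$. If $j < k$, the strategy is to produce an edge-swap on the factorization of $N$ that introduces a variable $z \mid M:N$, either lowering the block index (so $L > N$ is immediate) or preserving the block while lowering the $\Nc^{(s)}$-rank of the residual $s$-edge factor.

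The principal obstacle is the case $j < k$. The plan is to exploit three pentagon-specific features: every vertex of $C_5$ lies in exactly two edges, severely restricting the available swaps; the explicit pure-power order $e_1^2 > e_2^2 > e_3^2 > e_5^2 > e_4^2$ of $\Nc^{(2)}$ dictates which swaps lower the block index; and after reducing to $\gcd(M,N) = 1$, only a bounded number of local configurations can occur, independently of $s$. In each configuration one produces either a swap yielding $L$ in an earlier block or reduces to the inductive hypothesis applied to an auxiliary pair in $I(C_5)^s$, completing the verification that $\Nc^{(s+1)}$ is a linear quotient ordering.
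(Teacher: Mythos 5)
Your skeleton coincides with the paper's proof: induct on $s$ starting from the order (\ref{Istanbul}), decompose the generators of $I(C_5)^{s+1}$ into blocks according to the first pure-power multiplier ($\Ac(e_1),\Ac(e_2),\Ac(e_3),\Ac(e_5),\Ac(e_4)$ in the paper's notation), and dispose of the same-block case by applying the inductive hypothesis to the residual factors. Up to that point you and the paper agree. But the entire substance of the proposition lies in your case $j<k$, and there you only assert a plan rather than give an argument. The paper handles this case by exhibiting, for each block of $N=w_p$, explicit earlier generators obtained by concrete (sometimes double) edge swaps --- e.g.\ $(e_1w_p/e_2):w_p=a$, $(e_1w_p/e_5):w_p=b$, $(e_1e_3w_p/(e_2e_5)):w_p=d$ when $w_p\in\Ac(e_2)$, and $(e_2w_p/e_3):w_p=b$, $(e_3w_p/e_4):w_p=c$, $e_4e_5^{q-1}:e_5^q=d$ in the later blocks --- and then proving that at least one of those specific variables divides $w_i:w_p$ by a \emph{global} count on the exponent vectors: writing $w_i=e_1^{f_1}\cdots e_5^{f_5}$ and $w_p=e_2^{f_2'}\cdots e_5^{f_5'}$, if none of the variables works one sums the resulting inequalities (such as $f_1+f_5\le f_5'$, $f_1+f_2\le f_2'$, $f_3+f_4\le f_3'+f_4'$) and contradicts $\sum f_i=\sum f_i'$ with $f_1>0$.

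This is where your proposal has a genuine gap. The claim that ``after reducing to $\gcd(M,N)=1$, only a bounded number of local configurations can occur, independently of $s$'' is neither proved nor obviously true as a mechanism: whether a given swap produces a generator earlier than $N$ depends on $N$'s block membership, which in turn depends on which edge factorizations of the full monomial exist (e.g.\ whether $e_5$ occurs in some presentation of $w_p$), and these data are not visible in a bounded local window nor clearly stable under dividing out common factors, since cancelling variables need not respect any factorization into edges and can change both the block index and the relative position in $\Nc^{(s)}$. Moreover, the decisive verification that one of the candidate colon variables actually divides $M:N$ is, in the paper's argument, an inequality on the full exponent vectors of $M$ and $N$ (uniform in $s$, but not local), not an enumeration of finitely many configurations. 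So as written, your proposal defers exactly the step that constitutes the proof; to complete it you would need to carry out the block-by-block analysis (the paper's Cases 1--5), i.e.\ name the swapped generators realizing each variable in the colon ideal and supply the degree-count argument showing one of them divides $M:N$.
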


\begin{proof}
Starting from the linear quotient ordering (\ref{Istanbul}) of $I(C_5)^2$, we prove that Conjecture \ref{NewOrleans} is true for $q \geq 2$ by using induction on $q$.  Note that $e_1=ab,e_2=bc,e_3=cd,e_4=de$ and $e_5=ea$ are algebraically independent in $S=K[a,b,c,d,e]$.  

Let $q > 3$ and suppose that $I(C_5)^{q-1}$ has linear quotients with the ordering $u_1>\cdots >u_{\binom{5+(q-1)-1}{q-1}}$ coming from Conjecture \ref{NewOrleans}.  In particular $u_1 = e_1^{q-1}$ and $u_{\binom{5+(q-1)-1}{q-1}} = e_4^{q-1}$.  Let $w_1 > \cdots > w_{\binom{5+q-1}{q}}$ denote the ordering of $I(C_5)^{q}$ coming from Conjecture \ref{NewOrleans}. 
   
Let $\Ac(e_1)$ denote the set of $w_i$ which  can be written as a product of $q$ edges, one of which is $e_1$. In this case, we say $w_i$ has a presentation using $e_1$.  Let
$\Ac(e_2)$ denote the set of $w_i \not\in \Ac(e_1)$ which have a presentation using $e_2$, $\Ac(e_3)$ the set of $w_i \not\in \Ac(e_1) \cup \Ac(e_2)$ which have a presentation using $e_3$ and $\Ac(e_5)$ the set of $w_i \not\in \Ac(e_1) \cup \Ac(e_2) \cup \Ac(e_3)$ which have a presentation using $e_5$.  Finally, $\Ac(e_4) = \{e_4^q\}$.

We claim that for any $2 \leq p \leq \binom{5+q-1}{q}$, the colon ideal $(w_1, \ldots, w_{p-1}):w_p$ is generated by a subset of $\{a,b,c,d,e\}$.  
If $w_p \in \Ac(e_1)$, then the claim follows from the induction hypothesis.  

\noindent\textbf{CASE 1:} Let $w_p \in \Ac(e_2)$ and $w_i \in \Ac(e_2)$ with $i < p$.  Let $w_i = u_{\ell}e_2$ and $w_p = u_te_2$ with $s < t$.  Then there is $u_\ell$ with $\ell < t$ for which $u_\ell : u_t$ is a variable and $(u_\ell : u_t)|(u_{\ell} : u_t)$. Set $w_j = u_\ell e_2$ and notice that $j < p$.  Note that $w_j$ may belong to $\Ac(e_1)$.  Then $w_j : w_p$ is a variable and $(w_j : w_p)|(w_i : w_p)$, as desired.  

\noindent\textbf{CASE 2:} Let $w_p \in \Ac(e_2)$ and $w_i \in \Ac(e_1)$.  If $a$ does not divide $w_p$, then $w_i:w_p$ is divisible by $a = (e_1w_p/e_2):w_p$.  If $e_5 = ae$ appears in the presentation of $w_p$, then both $(e_1 w_p / e_5):w_p = b$ and $(e_1e_3 w_p / e_2 e_5):w_p = d$  
belong to $(w_1, \ldots, w_{p-1}):w_p$.  Let $w_i = e_1^{f_1}e_2^{f_2}e_3^{f_3}e_4^{f_4}e_5^{f_5}$ and $w_p = e_2^{f'_2}e_3^{f'_3}e_4^{f'_4}e_5^{f'_5}$.  If one of the inequalities
\[
f_1 + f_5 > f'_5, \, \, \, f_1 + f_2 > f_2', \, \, \, f_3 + f_4 > f'_3 + f'_4 
\]
 holds, then $w_i : w_p \in (a,b,d)$.  If none of the above inequalities is satisfied, then
\[
2f_1 + f_2 + f_3 + f_4 + f_5 \leq f_2' + f_3' + f'_4 +f'_5,
\]
which is impossible, since $\sum_{i=1}^5 f_i = \sum_{i=2}^5f'$ and $f_1 > 0$.

\noindent\textbf{CASE 3:} Let $w_p \in \Ac(e_3)$.  Since $e_2(w_p/e_3) \in \Ac(e_2)$, it follows that $(e_2w_p/e_3):w_p = b $ belongs to $(w_1, \ldots, w_{p-1}):w_p$.  Since $b$ does not divide $e_p$, if $w_i \in \Ac(e_1) \cup \Ac(e_2)$, then $b$ divides $w_i:w_p$.  If $w_i \in \Ac(e_3)$, then a similar argument as in Case 1
is valid without modification. 

\noindent\textbf{CASE 4:} Let $w_p \in \Ac(e_5)$.  Since $e_1(w_p/e_5) \in \Ac(e_1)$, it follows that $(e_1w_p/e_5):w_p = b $ belongs to $(w_1, \ldots, w_{p-1}):w_p$.  Since $b$ does not divide $e_p$, if $w_i \in \Ac(e_1) \cup \Ac(e_2)$, then $b$ divides $w_i:w_p$.  If $e_4$ appears in the presentation of $w_p$, then $e_3(w_p/e_4)\in \Ac(e_3)$ and $(e_3w_p/e_4):w_p = c $ belongs to $(w_1, \ldots, w_{p-1}):w_p$.  Since $c$ does not divide $e_p$, if $e_i \in \Ac(e_3)$, then $c$ divides $w_i:w_p$.  
Let $w_p = e_5^q$.  Since $e_4e_5^{q-1} > e_5^q$ and $e_4e_5^{q-1} :e_5^q = d$, then $d$ belongs $(w_1, \ldots, w_{p-1}):w_p$.  Since $d$ does not divide $w_p = e_5^q$, if $e_i \in \Ac(e_3)$, then $d$ divides $w_i:w_p$.  If $e_i \in \Ac(e_5)$, then a similar argument as in Case 1
is valid without modification. 

\noindent\textbf{CASE 5:} Finally, if $w_p \in \Ac(e_4)$, in other words, $e_p = e_4^q$, then a routine computation done as above yields $(w_1, \ldots, w_{p-1}):w_p = (a,c)$.  This completes the proof. 
\end{proof}

\begin{Remark}
The following ordering
\[
a^2b^2,a^2be,ab^2c,abce,a^2e^2,b^2c^2,abcd,abde,bc^2d,ade^2,bcde,acde,cd^2e,c^2d^2,d^2e^2.
\]
can also be an ordering for $I(C_5)^2$ to have linear quotients, where the ordering of pure powers is
$e_1^2 > e_5^2 > e_2^2> e_3^2 > e_4^2$.
\end{Remark}


\section{Pentagon together with one vertex} \label{sec.Pentagon}

Developing the argument from the proof of Proposition \ref{pentagon}, we now study finite graphs formed by carefully adding vertices to the pentagon. We first study the case where a single vertex is added with specific connecting edges.

Let $G$ be
the finite graph with the vertices $\{a,b,p,q,x,z\}$ and with edges 
\begin{eqnarray*}
& e_1=\{a,b\},e_2=\{a,x\},e_3=\{b,x\},e_4=\{a,p\}, e_5=\{b,q\}, &
\\
& e_6 = \{p,z\}, e_7 = \{x,z\}, e_8 = \{q,z\} &
\end{eqnarray*}
depicted below (Figure $2$). 
\begin{figure}[h]
\centering
\begin{tikzpicture}[scale=1.5]
\coordinate (z) at (0,1){};
\coordinate (p) at (-1,0.3){};
\coordinate (q) at (1,0.3){};
\coordinate (x) at (0,0){};
\coordinate (a) at (-0.7,-0.7){};
\coordinate (b) at (0.7,-0.7){};
\fill(z)circle(0.7mm);
\fill(p)circle(0.7mm);
\fill(q)circle(0.7mm);
\fill(x)circle(0.7mm);
\fill(a)circle(0.7mm);
\fill(b)circle(0.7mm);
\draw(z)--(p)--(a)--(b)--(q)--cycle;
\draw(z)--(x)--(a);
\draw(x)--(b);
\draw(0,1.2)node{{\Large $z$}};
\draw(-1.2,0.3)node{{\Large $p$}};
\draw(1.2,0.3)node{{\Large $q$}};
\draw(0.2,0.1)node{{\Large $x$}};
\draw(-0.7,-0.9)node{{\Large $a$}};
\draw(0.6,-0.9)node{{\Large $b$}};
\end{tikzpicture}
\caption{Pentagon together with one vertex}
\label{figure:susan_selvi}
\end{figure}
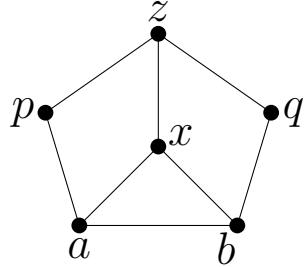
The edge ideal $I(G)$ has $8$ generators and, since $e_2e_6=e_4e_7$ and $e_3e_8 = e_5e_7$, it follows that $I(G)^2$ has $\binom{8+1}{2} -2 = 36-2=34$ generators.  
Let $\Nc^{(2)}$ denote the following order on these generators:
\[
\begin{array}{cccc}
e_1^2=(ab)^2, & e_1e_2 = (ab)(ax), & e_1e_3=(ab)(bx), & e_1e_4=(ab)(ap), \\
e_1e_5 = (ab)(bq), & e_2e_4=(ax)(ap), & e_2e_3=(ax)(bx), & e_3e_5=(bx)(bq), \\ 
e_3e_4=(bx)(ap) & e_2e_5 = (ax)(bq), & e_4e_5=(ap)(bq), & e_1e_6=(ab)(pz), \\ 
e_1e_8=(ab)(qz), & e_1e_7=(ab)(xz), & e_4e_7=(ap)(xz), & e_5e_7 = (bq)(xz),\\ 
e_3e_6=(bx)(pz), & e_3e_8=(ax)(qz), & e_2e_7=(ax)(xz) & e_3e_7 = (bx)(xz), \\
e_4e_6 = (ap)(pz), & e_5e_8=(bq)(qz), & e_4e_8=(ap)(qz), & e_5e_6=(bq)(pz), \\ 
e_6e_7=(pz)(xz), & e_6e_8=(pz)(qz), & e_7e_8=(xz)(qz), & e_4^2=(ap)^2, \\ 
e_2^2 = (ax)^2, & e_3^2=(bx)^2, & e_5^2 = (bq)^2, & e_6^2 = (pz)^2, \\
e_7^2 = (xz)^2, & e_8^2 = (qz)^2. & & \\
\end{array}
\]
It is now a direct computation to verify that with the order $\Nc^{(2)}$, $I(G)^2$ has linear quotients.

Using the ordering $\Nc^{(2)}$ of the generators of $I(G)^2$ as above,
one can inductively show that all powers of $I(G)$ have linear quotients. The following  notation will be useful in the proof of Proposition \ref{p:SusanSelvi}.

\begin{Notation}
Let $G$ be a finite graph and $w$ be one of the generators of $I(G)^s$ for $s\geq 1$.  The notation $e_j \shortmid w$ means that there exists a factorization $w=e_{i_1}\cdots e_{i_N}$ with $e_j = e_{i_k}$ for some $1 \leq k \leq s$.
\end{Notation}

Note that this notation is intended to convey a notion of division in terms of a factorization into a product of generators, rather than literal division. For instance, if $G$ is as in Figure \ref{figure:susan_selvi}, then $e_1 = ab$ divides $w=e_4e_5=abpq$, but $e_1 \nshortmid w$ since $pq$ is not a generator of $I(G)$.

\begin{Proposition}
\label{p:SusanSelvi}
Let $G$ be the finite graph in Figure $2$.  Then all powers $I(G)^s$ with $s \geq 2$ have linear quotients.
\end{Proposition}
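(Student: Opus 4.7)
The strategy mirrors the inductive approach used in Proposition \ref{pentagon}: take the ordering $\Nc^{(2)}$ as the base step and extend it to all higher powers via the efficient-ordering construction. I would induct on $s \geq 2$; the base case $s = 2$ is precisely the direct verification of $\Nc^{(2)}$ carried out above. For the inductive step, assume that $I(G)^{s-1}$ has linear quotients with respect to the efficient ordering $\Nc^{(s-1)}$ built from $\Nc^{(2)}$, and let $w_1 > \cdots > w_N$ denote the efficient ordering $\Nc^{(s)}$ of the generators of $I(G)^s$.

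Reading off the order in which pure powers appear in $\Nc^{(2)}$, namely
\[
e_1^2 > e_4^2 > e_2^2 > e_3^2 > e_5^2 > e_6^2 > e_7^2 > e_8^2,
\]
partition the generators of $I(G)^s$ into classes $\mathcal{A}(e_1), \mathcal{A}(e_4), \mathcal{A}(e_2), \mathcal{A}(e_3), \mathcal{A}(e_5), \mathcal{A}(e_6), \mathcal{A}(e_7), \mathcal{A}(e_8)$, where $\mathcal{A}(e_j)$ is the set of $w_p$ admitting a factorization into edges with $e_j \shortmid w_p$ but with $e_i \nshortmid w_p$ for every $e_i$ earlier in the above list. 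For each $w_p$ one then must show that $(w_1, \ldots, w_{p-1}) : w_p$ is generated by a subset of $\{a, b, p, q, x, z\}$.

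For $w_p \in \mathcal{A}(e_1)$, write $w_p = u_t e_1$ for some generator $u_t$ of $I(G)^{s-1}$; any $w_i$ preceding $w_p$ in $\Nc^{(s)}$ and also lying in $\mathcal{A}(e_1)$ has the form $u_\ell e_1$ with $\ell < t$, and linearity is inherited directly from $\Nc^{(s-1)}$. For $w_p \in \mathcal{A}(e_j)$ with $j \geq 2$, I would split based on the class of $w_i$: when $w_i \in \mathcal{A}(e_j)$, the argument reduces to the inductive hypothesis exactly as in Case 1 of the proof of Proposition \ref{pentagon}; when $w_i$ lies in a class earlier than $\mathcal{A}(e_j)$, I would exhibit an explicit \emph{swap} generator $w'$ obtained by replacing one occurrence of $e_j$ in $w_p$ by an earlier edge, e.g.\ $e_4 = ap \mapsto e_1 = ab$ via $p \mapsto b$, or $e_6 = pz \mapsto e_4 = ap$ via $z \mapsto a$, and similarly for the remaining edges. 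Because $w'$ precedes $w_p$ in $\Nc^{(s)}$, the single variable $w' : w_p$ enters the colon ideal, and by the structure of the class assignment this variable divides $w_i : w_p$.

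The main obstacle is the triangle--pentagon interaction at the vertex $x$. Because of the relations $(ax)(pz) = (ap)(xz)$ and $(bx)(qz) = (bq)(xz)$, i.e.\ $e_2 e_6 = e_4 e_7$ and $e_3 e_8 = e_5 e_7$, a generator may admit several inequivalent factorizations into edges, so one must be careful both in assigning $w_p$ to the correct $\mathcal{A}(e_j)$ and in selecting the swap generator $w'$. These complications concentrate in the classes $\mathcal{A}(e_6), \mathcal{A}(e_7), \mathcal{A}(e_8)$, where a naive swap can collapse under the square relations to a generator in an unintended class. I expect the hardest step to be confirming, for each such $w_p$, that at least one of the two natural swap candidates (arising from the two available factorizations) succeeds and produces a variable dividing every $w_i : w_p$ with $i < p$ in an earlier class; analogous to Cases 3--5 in Proposition \ref{pentagon}, one may need to combine two swaps (producing variables such as $b$ and $d$ simultaneously in the $C_5$ argument, here producing two of $\{a,b,p,q,x,z\}$) to cover all $w_i$. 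Once these finitely many cases are worked out explicitly, the inductive step closes and the proposition follows.
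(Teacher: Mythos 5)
Your setup coincides with the paper's own proof: induction on $s$ with the efficient ordering built from $\Nc^{(2)}$, the partition into classes $\Ac(e_1), \Ac(e_4), \Ac(e_2), \Ac(e_3), \Ac(e_5), \Ac(e_6), \Ac(e_7), \Ac(e_8)$ following the order of the pure powers in $\Nc^{(2)}$, the within-class pairs handled by the induction hypothesis, and swap generators supplying variables in the colon ideal. You also correctly flag the syzygies $e_2e_6=e_4e_7$ and $e_3e_8=e_5e_7$ as the source of complications. So the route is the same one the paper takes.

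The gap is that the step you dispose of with ``by the structure of the class assignment this variable divides $w_i:w_p$'' is precisely the entire content of the argument, and as stated it would fail: a single swap variable does not in general divide every colon $w_i:w_p$ with $w_i$ in an earlier class, and divisibility is not a formal consequence of how classes are assigned. In the paper, for $w_n=u_\ell e_4\in\Ac(e_4)$ the obvious variable is $b$ (every element of $\Ac(e_1)$ is divisible by $b$), but when $b\mid u_\ell$ this no longer covers all predecessors; one must then manufacture further predecessors (e.g.\ $w'e_4e_1$ and $w'e_7e_1$ when $u_\ell=w'e_3$, giving $a$ and $z$) to obtain $(a,b,z)\subseteq (w_1,\ldots,w_{n-1}):w_n$, and then prove by a degree count on the exponents of $a,b,z$ in $w_i$ and $w_n$ (a system of inequalities forcing $t_1\le 0$) that every $w_i\in\Ac(e_1)$ has $w_i:w_n$ divisible by one of these variables; the same pattern, with $(a,b,p,z)$, is needed for $\Ac(e_2)$, and the syzygies are used there not only for class assignment but to rule out $e_6\shortmid w_n$, which the degree argument relies on. Moreover, your expectation that the hard cases are $\Ac(e_6),\Ac(e_7),\Ac(e_8)$ is the reverse of what happens: those are the easy cases (all earlier generators are divisible by one of $a,b,p$, which miss $w_n$), except for the pure-power tails $w_n=e_6^s$ and $w_n=e_8^s$, which need a different idea you do not mention, namely that $\{a,x,q\}$ is a vertex cover of $G\setminus\{e_6\}$ and $\{b,p,x\}$ of $G\setminus\{e_8\}$, combined with the explicit predecessors $e_6^{s-1}e_7$, $e_6^{s-1}e_8$, $e_8^{s-1}e_5$, $e_8^{s-1}e_6$, $e_8^{s-1}e_7$. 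Until these case-by-case verifications (the analogues of the paper's Cases 1--6) are actually carried out, the inductive step is not established.
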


\begin{proof}
We prove that Conjecture \ref{NewOrleans} is true for $G$ with $q_0 = 2$.
Assume $s \geq 3$ and suppose that $I(G)^{s-1}$ has linear quotients with the ordering $\Nc^{(s-1)}$: $u_1>\cdots >u_{r}$  where  $u_1 = e_1^{s-1}$ and $ u_{r} = e_8^{s-1}$.  Let $\Nc^{(s)}: ~~w_1 > \cdots > w_{t}$ denote the efficient ordering of $I(G)^{s}$ constructed from $\Nc^{(s-1)}$ as in Conjecture \ref{NewOrleans}. For ease of notation,  use the following notation to refer to elements in $\Nc^{(s)}$ 
    $$\Ac(e_1) , \Ac(e_4), \Ac(e_2), \Ac(e_3), \Ac(e_5), \Ac(e_6), \Ac(e_7), \Ac(e_8)$$
    where $w_k \in \Ac(e_i)$ means $e_i \shortmid w_k$ and $e_j \nshortmid w_k$ for for $j$ such that $e_j^{s-1} > e_i^{s-1}$ in $\Nc$.

    We claim that the colon ideal $(w_1,\ldots, w_{n-1}):w_n$ is linear for each $w_n$. If $w_n\in \Ac(e_1)$, then the claim follows from the induction hypothesis. 
    
    \noindent\textbf{CASE 1:} Let  $w_n =u_{\ell}e_4\in \Ac(e_4)$  for $\ell \leq r$. First, observe that $(b) \subseteq (w_1,\ldots, w_{n-1}):w_n$ since $(u_{\ell} e_1: u_{\ell} e_4) = (b)$ for $u_{\ell} e_1\in \Ac(e_1)$.  Let $w_i= u_te_4 \in \Ac(e_4)$  for $t<\ell$. By the induction hypothesis,  there exists $u_l$ with $l<\ell$ such that $u_l:u_{\ell}$ is a variable dividing $u_t:u_{\ell}$. Let $w_j=u_le_4$ and notice that $j<n$. Then, $w_j:w_n$ is a variable that divides $w_i:w_n$, as desired. Note that $w_j \in  \Ac(e_1)$ is possible.

    Next, let  $w_i \in \Ac(e_1)$.
    Recall that $b \nmid e_4$.  If  $b \nmid u_{\ell}$,  then   $w_i:w_n\in (b)$ since every element of $\Ac(e_1)$ is divisible by $b$. Then,  $(u_1e_1,\ldots,u_re_1):w_n = (b)$. Hence,  $(w_1,\ldots, w_{n-1}):w_n$ is linear in this case. Suppose $b \mid u_{\ell}$. Then either $e_3 \shortmid u_{\ell}$ or $e_5 \shortmid u_{\ell}$. If $e_3 \shortmid u_{\ell}$, then $u_{\ell}= w' e_3$ for a generator $w' \in I(G)^{s-2}$. Consider the elements $w_j= w' e_4 e_1$ and $w_k= w' e_7e_1$ in $\Ac(e_1)$. Note that $(w_j: w_n)= (a) $ and $  (w_k:w_n)= (z)$. 
    If $e_5 \shortmid u_{\ell}$, then $u_{\ell}= w'' e_5$ for a generators $w'' \in I(G)^{s-2}$. Similarly, consider $w_j= w'' e_4 e_1$ and $w_k= w'' e_8e_1$ in $\Ac(e_1)$ where $(w_j: w_n)= (a) $ and $  (w_k:w_n)= (z)$.  So, in either of these scenarios, we have
     $(a,b, z) \subseteq (w_1,\ldots, w_{n-1}):w_n.$
    Let $w_i = e_1^{t_1}\cdots e_8^{t_8}$ where $t_1>0$ and $w_n = e_2^{t_2'}\cdots e_8^{t_8'}$ since $e_1 \nshortmid w_n$. If $w_i:w_n \notin (a,b,z)$, then examining the degrees of $a,b,z$ in $w_i$ and $w_n$, respectively, results in 
    \begin{eqnarray*}
    t_1 + t_2 + t_4 &\leq& t_2' + t_4'\\
    t_1+t_3+t_5 &\leq& t_3'+t_5'\\
    t_6+t_7+t_8 &\leq& t_6'+t_7' +t_8'
\end{eqnarray*}
which implies that $t_1\leq 0$, a contradiction. Hence, $(w_1,\ldots, w_{n-1}):w_n$ is linear for each $w_n \in \Ac(e_1) \cup \Ac(e_4)$.

\noindent\textbf{CASE 3:} Let $w_n=u_{\ell}e_2 \in \Ac(e_2)$ for some $\ell \leq r$.  Note that $e_6 \nshortmid w_n$ because $e_6e_2 =e_7e_4$ and $e_4 \nshortmid e_n$. Next,  observe that  $(b,p) \subseteq (w_1,\ldots, w_{n-1}):w_n$ since $(u_{\ell} e_1 : u_{\ell} e_2) =(b)$ and $(u_{\ell} e_4: u_{\ell} e_2) = (p)$. Let $w_i \in \Ac(e_2)$. As in the previous case, by the induction hypothesis,  there exists $w_j$ for $j<n$  such that $w_j:w_n$ is a variable that divides $w_i:w_n$, as desired.  Let $w_i \in \Ac(e_1) \cup \Ac(e_4)$.  Observe that $p \nmid u_{\ell}$; otherwise,   $e_4 \shortmid u_{\ell}$ or $e_6 \shortmid u_{\ell}$ but neither of these cases are possible since $w_n \in \Ac(e_2)$. If   $b \nmid u_{\ell}$, then $(w_i:w_n) \subseteq (b,p)$ which implies that $(w_1,\ldots, w_{n-1}):w_n$ is generated by variables. So, claim holds for this case. 
Suppose $b \mid u_{\ell}.$ Then either $e_3\shortmid u_{\ell}$ or $e_5\shortmid u_{\ell}$ since $e_1 \nshortmid u_{\ell}.$ If $e_3 \shortmid u_{\ell}$, then $u_{\ell}=w'e_3=w'bx$ for some $w' \in I(G)^{s-2}$.  Consider the elements $w_j= w'e_2e_1 $ and $w_k= w'e_7e_1$ in $ \Ac(e_1)$ where $(w_j:w_n)=(a)$ and $(w_k:w_n)=(z)$. So, 
$(a,b,p, z) \subseteq (w_1,\ldots, w_{n-1}): w_n.$

Assuming $e_5 \shortmid u_{\ell}$ also yields to the above containment. Let $w_i = e_1^{t_1}\cdots e_8^{t_8}$  where $t_1>0$ or $t_4 >0$ and  $w_n = e_2^{t_2'}e_3^{t_3'}e_5^{t_5'}e_t^{t_7'}e_8^{t_8'}$ since $e_1, e_4, e_6$ do not divide $w_n=u_{\ell}e_2.$  If $w_i : w_n \notin  (a,b,p,z)$, then  examining degrees of $a,b,p,z$ in $w_i$ and $w_n$ results with
 $t_1+t_4+t_6\leq 0$, a contradiction.  So, $(w_1,\ldots, w_{n-1}):w_n$ is linear in this case.

\noindent\textbf{CASE 4:} Now, let $w_n=u_{\ell}e_3 \in \Ac(e_3)$ for some $u_{\ell}\in I(G)^{s-1}$.  First observe that  $(a) \subseteq (w_1,\ldots, w_{n-1}):w_n$ since $(u_{\ell} e_1 : u_{\ell} e_3) =(a)$.  The case $w_i \in \Ac(e_3)$ follows from the induction hypothesis as in the previous cases.    Let $w_i \in \Ac(e_1) \cup \Ac(e_4) \cup \Ac(e_2)$. Note that this collection contains all generators of $I(G)^s$ that are divisible by $a$ since $e_1, e_2$ and $e_4$ are the only edges incident to $a$ and $a \nmid w_n$.  This means  $w_i:w_n \in (a)$. Hence, $(w_1,\ldots, w_{n-1}):w_n$ is linear in this case.

 Next, let $w_n=u_{\ell}e_5 \in \Ac(e_5)$ for some $u_{\ell}\in I(G)^{s-1}$.  First observe that  $(a,x) \subseteq (w_1,\ldots, w_{n-1}):w_n$ since $(u_{\ell} e_1 : u_{\ell} e_5) =(a)$ and $(u_{\ell} e_3 : u_{\ell} e_5) =(x)$.  The case $w_i \in \Ac(e_5)$ follows from the induction hypothesis as in the previous cases.  Let $w_i \in \Ac(e_1) \cup \Ac(e_4) \cup \Ac(e_2)\cup \Ac(e_3)$.  Note that $w_i$ is divisible by $a$ or $x$ and $w_n$ is not divisible by $a$. Furthermore, $w_n$ is not divisible by $x$. Otherwise, if $x \mid u_{\ell}$, then $u_{\ell}=w' e_7$ for a generator $w'$ of $I(G)^{s-2}$ and $w_n=w' e_7 e_5 = w' e_6 e_3$ which implies that $e_3 \shortmid w_n$, a contradiction. So, $w_i:w_n \in(a,x)$.  Hence, $(w_1,\ldots, w_{n-1}):w_n$ is linear in this case.

\noindent\textbf{CASE 5:} Let $w_n=u_{\ell}e_6 \in \Ac(e_6)$ for some $u_{\ell}\in I(G)^{s-1}$.  Observe that  $(a) \subseteq (w_1,\ldots, w_{n-1}):w_n$ since $(u_{\ell} e_4 : u_{\ell} e_6) =(a)$.  The case $w_i \in \Ac(e_6)$ follows from the induction hypothesis as in the previous cases.  Let $w_i \in \Ac(e_1) \cup \Ac(e_4) \cup \Ac(e_2)\cup \Ac(e_3) \cup \Ac(e_5)$.  Note that $w_i$ is divisible by $a$ or $b$ while $a\nshortmid  w_n$ and  $b \nshortmid  w_n$. This implies that $w_i:w_n \in (a,b)$.

Observe that $u_{\ell}= e_6^{t_6}e_7^{t_7}e_8^{t_8}$ since $ u_{\ell}$ must be divisible by $z$. If $t_8 \geq 1$, then $u_{\ell} = w' e_8$ for a generator $w' \in I(G)^{s-2}$ and $(w' e_6 e_5 : w' e_6 e_8)= (b)$. Similarly, if $t_7\geq 1$, then $u_{\ell}= w' e_7$ for a generator $w' \in I(G)^{s-2}$ and $(w' e_6 e_3 : w' e_6 e_7)= (b)$. So, $(w_1,\ldots, w_{n-1}):w_n$ is linear in these cases. For the remaining case, we have $w_n= e_6^s$. Notice that, for each $w_i$, we have  $w_i:w_n \in (a,x,q)$ since $\{a,x,q\}$ is a  vertex cover of $G\setminus \{e_6\}$. Furthermore, note that the generators $w_j=e_6^{s-1}e_7 $ and $w_k= e_6^{s-1}e_8$ come before $w_n$ in $\Ac(e_6)$  from  the construction of $\Nc^{(s)}$. In addition, $(w_j:w_n) = (x)$ and  $(w_k:w_n)= (q)$.   Thus,  the claim holds.

\noindent\textbf{CASE 6:} For the final two cases, if $w_n=u_{\ell}e_7 \in \Ac(e_7)$ then none of $a,b,p$ divide $u_{\ell},$ but every element of $\Ac(e_1) \cup \Ac(e_4) \cup \Ac(e_2) \cup \Ac(e_3) \cup \Ac(e_5)\cup  \Ac(e_6)$ is divisible by one of these three variables. Since $(u_{\ell} e_2:u_{\ell} e_7)=(a)$ and $(u_{\ell} e_3:u_{\ell} e_7)=(b)$ and $(u_{\ell} e_6:u_{\ell} e_7)=(p)$, we are done. For the last case, we have $w_n=e_8^s$. Note that $(e_8^{s-1}e_5:e_8^s)=(b), ~~(e_8^{s-1}e_6:e_8^s)=(p)$ and $(e_8^{s-1} e_7:e_8^s)=(x)$. Since $\{b,p,x\}$ is a vertex cover of $G \setminus \{e_8\}$, we have $w_i : w_n \in (b,p,x)$ for each  $w_i \neq w_n$. Therefore, the claim holds. 
\end{proof}

As an example of how \Cref{t:complete} can be applied, consider the class of graphs formed by replacing the central vertex $x$ of~\Cref{figure:susan_selvi} by a complete graph $K_n$, with each vertex of $K_n$ connected to $a,b,z$. 
We denote such a graph by $(C_5,K_n)$ and  label the vertices of $K_n$ by $x_1, \ldots,x_n$. The case $n=4$ is shown below.
\begin{figure}[h]
\centering
\begin{tikzpicture}[scale=1.5]
\coordinate (z) at (0,1){};
\coordinate (p) at (-1,0.3){};
\coordinate (q) at (1,0.3){};
\coordinate (x1) at (-.18,-.26){};
\coordinate (x2) at (-.26,.2){};
\coordinate (x4) at (.26,-.2){};
\coordinate (x3) at (.22,.24){};
\coordinate (a) at (-0.7,-0.7){};
\coordinate (b) at (0.7,-0.7){};
\fill(z)circle(0.7mm);
\fill(p)circle(0.7mm);
\fill(q)circle(0.7mm);
\fill(x1)circle(0.7mm);
\fill(x2)circle(0.7mm);
\fill(x3)circle(0.7mm);
\fill(x4)circle(0.7mm);
\fill(a)circle(0.7mm);
\fill(b)circle(0.7mm);
\draw(z)--(p)--(a)--(b)--(q)--cycle;
\draw(z)--(x1)--(a);
\draw(z)--(x2)--(a);
\draw(z)--(x3)--(a);
\draw(z)--(x4)--(a);
\draw(x1)--(b);
\draw(x2)--(b);
\draw(x3)--(b);
\draw(x4)--(b);
\draw(x1)--(x2)--(x3)--(x4)--cycle;
\draw(x1)--(x3);
\draw(x2)--(x4);
\draw(0,1.2)node{{\Large $z$}};
\draw(-1.2,0.3)node{{\Large $p$}};
\draw(1.2,0.3)node{{\Large $q$}};
\draw(-.2,-.5)node{$x_1$};
\draw(-.5,.2)node{$x_2$};
\draw(.5,.2)node{$x_3$};
\draw(.5,-.2)node{$x_4$};
\draw(-0.7,-0.9)node{{\Large $a$}};
\draw(0.7,-0.9)node{{\Large $b$}};
\end{tikzpicture}
\caption{$(C_5,K_4)$}
\label{figure:pentagon-complete}
\end{figure}

\begin{Example}
Let $I$ be the edge ideal of the graph $(C_5,K_n)$ for some $n\geq 1$. Then $I^s$ has linear quotients for all $s \ge 2$. The case $n=1$ holds by \Cref{p:SusanSelvi}. For $n>1$ the result holds via induction using Proposition \ref{p:duplication}.
\end{Example}

Notice that the above argument cannot extend to replace any vertex of the exterior pentagon with an edge. In particular, this would result in a finite graph that is not gapfree. However, the technique can be applied to the central vertex in \Cref{figure:selvi}, for instance, which is the base case for a second class of gapfree graphs built from adding a vertex to a pentagon.

\begin{figure}[h]
\centering
\begin{tikzpicture}[scale=1.5]
\coordinate (z) at (0,1){};
\coordinate (p) at (-1,0.3){};
\coordinate (q) at (1,0.3){};
\coordinate (x) at (0,0.-0.05){};
\coordinate (a) at (-0.7,-0.7){};
\coordinate (b) at (0.7,-0.7){};
\fill(z)circle(0.7mm);
\fill(p)circle(0.7mm);
\fill(q)circle(0.7mm);
\fill(x)circle(0.7mm);
\fill(a)circle(0.7mm);
\fill(b)circle(0.7mm);
\draw(z)--(p)--(a)--(b)--(q)--cycle;
\draw(p)--(x)--(q);
\draw(x)--(a);
\draw(x)--(b);
\draw(0,1.2)node{{\Large $z$}};
\draw(-1.2,0.3)node{{\Large $p$}};
\draw(1.2,0.3)node{{\Large $q$}};
\draw(0.02,0.15)node{{\Large $x$}};
\draw(-0.7,-0.9)node{{\Large $a$}};
\draw(0.6,-0.9)node{{\Large $b$}};
\end{tikzpicture}
\caption{Pentagon together with one vertex and multiple inner edges}
\label{figure:selvi}
\end{figure}
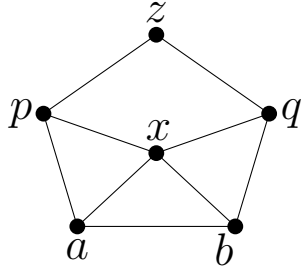

\begin{Proposition}\label{l:pentagon_4inner_edges}
Let $G$ be
the finite gapfree graph given in Figure \ref{figure:selvi}. Then $I(G)^s$ has linear quotients for all $s\geq 2$.
\end{Proposition}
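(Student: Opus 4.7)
The plan is to follow closely the inductive strategy of \Cref{p:SusanSelvi}, adapted to the nine-edge graph of \Cref{figure:selvi}. First I would label the edges: the pentagon $e_1 = ab$, $e_2 = pa$, $e_3 = bq$, $e_4 = zp$, $e_5 = zq$, together with the inner edges $e_6 = xa$, $e_7 = xb$, $e_8 = xp$, $e_9 = xq$. A direct check shows that the only coincidences among products of two edges are
\begin{equation*}
e_1 e_8 = e_2 e_7, \qquad e_1 e_9 = e_3 e_6, \qquad e_4 e_9 = e_5 e_8,
\end{equation*}
so $I(G)^2$ has $\binom{10}{2} - 3 = 42$ minimal generators. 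The base case is to exhibit an explicit linear quotients ordering $\Nc^{(2)}$ of these generators, modeled on the ordering preceding \Cref{p:SusanSelvi}: place $e_1^2$ first, then products divisible by $e_1$, then the remaining mixed products, and finally the pure squares in a chosen order such as $e_1^2 > e_4^2 > e_5^2 > e_2^2 > e_3^2 > e_8^2 > e_9^2 > e_6^2 > e_7^2$. Verifying the linear quotients property for $\Nc^{(2)}$ is a finite, direct check, also amenable to computer algebra.

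For the inductive step, assume $I(G)^{s-1}$ has linear quotients under the efficient ordering $\Nc^{(s-1)}$ of \Cref{NewOrleans}, and let $\Nc^{(s)} : w_1 > w_2 > \cdots > w_t$ be the efficient extension. Partition the generators of $I(G)^s$ into classes $\Ac(e_1), \Ac(e_4), \Ac(e_5), \Ac(e_2), \Ac(e_3), \Ac(e_8), \Ac(e_9), \Ac(e_6), \Ac(e_7)$ in accordance with the pure-power order, where $w_n \in \Ac(e_i)$ if $e_i \shortmid w_n$ and $e_j \nshortmid w_n$ for every $e_j^s$ preceding $e_i^s$. For each $w_n$, I would show that $(w_1, \ldots, w_{n-1}) : w_n$ is generated by variables via the following dichotomy:
\begin{itemize}
\item If $w_m$ lies in the same class $\Ac(e_i)$ as $w_n$ with $m < n$, the induction hypothesis on $I(G)^{s-1}$ furnishes a variable witness, exactly as in Case~1 of \Cref{p:SusanSelvi}.
\item If $w_m \in \Ac(e_j)$ for some $e_j^s$ preceding $e_i^s$, construct an explicit earlier generator $w_k$ with $w_k : w_n$ a single variable dividing $w_m : w_n$. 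The candidate $w_k$ is obtained by replacing one edge factor in the chosen presentation of $w_n$ with an edge from an earlier class that shares a vertex with it.
\end{itemize}

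The organizing principle, as in \Cref{p:SusanSelvi}, is that for a pure power $w_n = e_i^s$, every earlier generator $w_m$ satisfies $w_m : w_n \in (V)$ where $V$ is any vertex cover of $G \setminus \{e_i\}$ avoiding the endpoints of $e_i$; one then realizes each variable of a chosen minimum such cover as a colon ratio $w_k : w_n$ for an explicit earlier $w_k$. For instance, $\{a, p, q\}$ is a vertex cover of $G \setminus \{e_7\}$ avoiding $b, x$, which predicts $(w_1, \ldots, w_{n-1}) : e_7^s = (a, p, q)$. The order-two symmetry of $G$ swapping $(a, p) \leftrightarrow (b, q)$, $e_2 \leftrightarrow e_3$, $e_4 \leftrightarrow e_5$, $e_6 \leftrightarrow e_7$, $e_8 \leftrightarrow e_9$ with $e_1$ fixed reduces the nine classes to five essentially distinct cases, shortening the bookkeeping.

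The main obstacle is the presence of the three product coincidences displayed above, which allow a single monomial to be written as a product of edges in several distinct ways. This subtlety requires careful verification that, once a presentation of $w_n$ has been fixed, each constructed witness $w_k$ truly precedes $w_n$ in $\Nc^{(s)}$, and that the class $\Ac(e_i)$ of each generator is unambiguous. This bookkeeping is the main additional difficulty over \Cref{p:SusanSelvi}, whose graph exhibits only two such identifications; once the ordering $\Nc^{(2)}$ is chosen to be compatible with these coincidences, the induction proceeds by case analysis parallel to the proof of \Cref{p:SusanSelvi}.
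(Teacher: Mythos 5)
Your plan follows the same route as the paper's proof: an explicit linear quotients order on the $42$ generators of $I(G)^2$, the efficient extension of \Cref{NewOrleans}, a partition of the generators of $I(G)^s$ into classes $\Ac(e_i)$ determined by the earliest edge occurring in some factorization, same-class comparisons handled by the induction hypothesis exactly as in \Cref{p:SusanSelvi}, and vertex covers of $G\setminus\{e_i\}$ for the trailing pure powers; your count of generators and your list of the three product coincidences agree with the paper's.

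The genuine gap is in your treatment of cross-class comparisons, which is where the paper's proof does most of its work. Replacing one edge factor of $w_n$ by an earlier edge sharing a vertex only produces a short list of variables inside $(w_1,\dots,w_{n-1}):w_n$; it does not explain why, for an \emph{arbitrary} earlier-class generator $w_m$, the ratio $w_m:w_n$ is divisible by one of them, and in general no single-edge replacement tailored to $w_m$ is available. In the paper this is handled by a two-step mechanism: first the coincidence relations are used to constrain the presentation of $w_n$ (e.g.\ for $w_n=u_{\ell}e_2$ in the paper's labeling, $e_4\shortmid u_{\ell}$ is impossible because $e_2e_4=e_1e_6$), yielding witnesses that put a set such as $(a,b,x,z)$ into the colon ideal; then one assumes $w_m:w_n\notin(a,b,x,z)$, writes $w_m=e_1^{t_1}\cdots e_9^{t_9}$, and derives exponent inequalities from the degrees of those variables that force $t_1+t_3+t_4\le 0$ (and similarly $t_1+t_2+t_5\le 0$, $t_4\le 0$ in other classes), a contradiction. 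Nothing in your sketch supplies this counting argument, and without it the second bullet of your dichotomy does not go through for the classes corresponding to $e_2$, $e_3$, $e_5$ (paper's labels). Two smaller points: the base case is asserted rather than checked, and the pure-square order you suggest differs from the one the paper verifies, so it would still need to be confirmed and every later construction depends on which order is fixed; and the claimed reduction from nine classes to five via the automorphism swapping $a\leftrightarrow b$, $p\leftrightarrow q$ requires the chosen order (hence each initial segment) to be invariant under that automorphism, which a total order extending your pure-square order cannot literally be, so that shortcut needs justification rather than being automatic.
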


\begin{proof}
Label the edges of $G$ as follows:
\begin{eqnarray*}
& e_1=\{a,b\}, ~~e_2=\{a,p\}, ~~e_3=\{a,x\}, ~~e_4=\{b,x\}, ~~e_5=\{b,q\}, &
\\
& e_6 = \{x,p\}, ~~e_7 = \{x,q\}, ~~e_8 = \{p,z\}, ~~e_9=\{q,z\} &
\end{eqnarray*} 
One can verify that  $I(G)^2$ has linear quotients with respect to the following order:
    \[
\begin{array}{cccc}
e_1^2=(ab)^2, & e_1e_3 = (ab)(ax), & e_1e_4=(ab)(bx), & e_1e_2=(ab)(ap), \\
e_1e_6=(ab)(xp), &  e_1e_5=(ab)(bq), & e_1e_7=(ab)(xq), &  e_1e_8=(ab)(pz),\\
e_1e_9=(ab)(qz), & e_2e_8=(ap)(pz), & e_2e_3=(ap)(ax), & e_2e_7=(ap)(xq),\\
e_2e_9=(ap)(qz), & e_2e_5=(ap)(bq), &e_2e_6=(ap)(xp), & e_3e_8=(ax)(pz),\\
e_3e_4=(ax)(bx), & e_3e_7=(ax)(xq), & e_3e_9=(ax)qz), & e_3e_6=(ax)(xp),\\
e_4e_8=(bx)(pz), & e_4e_7=(bx)(xq), & e_4e_9=(bx)(qz), & e_4e_5=(bx)(bq),\\
e_4e_6=(bx)(xp), & e_5e_9=(bq)(qz), & e_5e_6=(bq)(xp), & e_5e_8=(bq)(pz),\\
e_5e_7=(bq)(xq), & e_6e_9=(xp)(qz), & e_6e_8=(xp)(pz), & e_6e_7=(xp)(xq),\\
e_7e_9=(xq)(qz), & e_8e_9=(pz)(qz), & e_2^2= (ap)^2, & e_3^2=(ax)^2,\\
e_4^2=(bx)^2, & e_5^2=(bq)^2, & e_6^2= (xp)^2, & e_7^2=(xq)^2,\\
e_8^2=(pz)^2, & e_9^2=(qz)^2. & &
\end{array}
\]
Assume $s \geq 3$. Suppose that $I(G)^{s-1}$ has linear quotients with the ordering $\Nc^{(s-1)}$: $u_1>\cdots >u_{r}$  where  $u_1 = e_1^{s-1}$ and $ u_{r} = e_9^{s-1}$.  Let $\Nc^{(s)}: ~~w_1 > \cdots > w_{t}$ denote the efficient ordering of $I(G)^{s}$ constructed from $\Nc^{(s-1)}$ as in  Conjecture \ref{NewOrleans}. As before,  use the following notation to refer to elements in $\Nc^{(s)}$ 
    $$\Ac(e_1) , \Ac(e_2), \Ac(e_3), \Ac(e_4), \Ac(e_5), \Ac(e_6), \Ac(e_7), \Ac(e_8), \Ac(e_9),$$
    where $w_k \in \Ac(e_i)$ means $e_i \shortmid w_k$ and $e_j \nshortmid w_k$ for each $j<i$. We claim that the colon ideal $(w_1,\ldots, w_{n-1}):w_n$ is linear for each $w_n$. If $w_n\in \Ac(e_1)$,  the claim follows from the induction hypothesis.

   \noindent\textbf{CASE 1:} Let $w_n= u_{\ell}e_2 \in \Ac(e_2)$ for some $\ell \leq r$. Note that $(b) \subseteq (w_1,\ldots, w_{n-1}):w_n$ since  $(u_{\ell}e_1:u_{\ell}e_2)=(b)$. If $w_i\in \Ac(e_2)$, it follows from the induction hypothesis that there exists $j<n$ such that $w_j:w_n$ is a variable dividing $w_i:w_n$, as desired. Let $w_i\in \Ac(e_1)$. If $ b\nmid u_{\ell}$, then $w_i:w_n \in (b)$   since each element in $\Ac(e_1)$ is divisible by $b$. Thus, $(w_1,\ldots, w_{n-1}):w_n$ is linear and we are done. Suppose $b \mid u_{\ell}$. Then either $e_4$ or $e_5$ divides $u_{\ell}$ since $e_1 \nshortmid u_{\ell}$. Since $e_2e_4=e_1e_6$, it is not possible to have  $e_4 \shortmid u_{\ell}$ . So, $e_5 \shortmid u_{\ell}$ where $u_{\ell}=w' e_5$ for a generator $w' \in I(G)^{s-2}$.  In this case, it is important to note that $e_3 \nshortmid u_{\ell}$ as $e_3e_5=e_1e_7$.  Consider the elements $w_j= w'e_2e_1, w_k= w' e_6e_1$ and $w_l= w'e_8e_1$ in $\Ac(e_1)$ where $(w_j:w_n)= (a), (w_k:w_n)= (x)$ and $(w_l:w_n)=(z)$. This means  $(a,b,x,z)\subseteq (w_1,\ldots, w_{n-1}):w_n.$

     Set $w_i = e_1^{t_1} \cdots e_9^{t_9}$ where $t_1>0$ and $w_n= e_2^{t'_2} e_5^{t'_5} e_6^{t'_6} e_7^{t'_7} e_8^{t'_8} e_9^{t'_9}$. If $w_i:w_n \notin (a,b,x,z)$,  we obtain the following  inequalities by examining the degrees of these four variables in $w_i$ and $w_n$
    \begin{align*}
        t_1+t_2+t_3 &\leq t'_2\\
        t_1+t_4+t_5 & \leq t'_5\\
        t_3+t_4+t_6+t_7 & \leq t'_6+t'_7\\
        t_8+t_9 &\leq t'_8+t'_9
    \end{align*}
    which implies that $t_1+t_3+t_4\leq 0$, a contradiction. Thus, $(w_1,\ldots, w_{n-1}):w_n$ is linear for each $w_n \in \Ac(e_1)\cup \Ac(e_2)$.

 \noindent\textbf{CASE 2:}   Let $w_n =u_{\ell} e_3 \in \Ac(e_3)$ for some $\ell \leq r$.  First recall that $e_1, e_2 \nshortmid u_{\ell}$ and also note that $e_5 \nshortmid u_{\ell}$ since $e_3e_5=e_1e_7$.  The case $w_i \in \Ac(e_3)$ follows from the induction hypothesis as in the previous case. Let $w_i\in \Ac(e_1)\cup \Ac(e_2)$. Then, $(b,p) \subseteq (w_1,\ldots, w_{n-1}):w_n$ since $(u_{\ell}e_1:w_n)=(b)$ and $(u_{\ell}e_2:w_n)=(p)$. If $b\nmid u_{\ell}$ and $p\nmid u_{\ell}$, then $(w_i:w_n) \subseteq (b,p)$ implies that  $(w_1,\ldots, w_{n-1}):w_n$ is linear. So, the claim holds for this case. If $b \mid u_{\ell}$, then  $e_4\shortmid u_{\ell}$. Let  $u_{\ell}=w' e_4$ for a generator $w' \in I(G)^{s-2}$. Consider $w_j= w'e_3e_1$ and $w_k= e_7e_1$ in $\Ac(e_1)$ such that $(w_j:w_n)=(a)$ and $(w_k:w_n)=(q)$. This means $(a,b,p,q) \subseteq (w_1,\ldots, w_{n-1}):w_n$. One can verify that the same  containment holds for the remaining case $p\mid u_{\ell}$ (which is split into two cases: $e_6 \shortmid u_{\ell}$ or $e_8\shortmid u_{\ell}$).  
    
    As in the previous case, let $w_i= e_1^{t_1} \cdots e_9^{t_9}$ where either $t_1>0$ or $t_2>0$ and $w_n= e_3^{t'_3}e_4^{t'_4}e_6^{t'_6} e_7^{t'_7} e_8^{t'_8} e_9^{t'_9}$. If $w_i:w_n \notin (a,b,p,q)$, as before, considering degrees of these variables results with  
$t_1+t_2+t_5 \leq 0$, a contradiction. Thus,  $(w_1,\ldots, w_{n-1}):w_n$ is linear in this case as well.

\noindent\textbf{CASE 3:} Let $w_n=u_{\ell}e_4  \in \Ac(e_4)$ for some $\ell \leq r$.  First observe that  $(a) \subseteq (w_1,\ldots, w_{n-1}):w_n$ since $(u_{\ell} e_1 : u_{\ell} e_4) =(a)$.  The case $w_i \in \Ac(e_4)$ follows from the induction hypothesis.    Let $w_i \in \Ac(e_1) \cup \Ac(e_2) \cup \Ac(e_3)$. Note that this collection contains all generators of $I(G)^q$ that are divisible by $a$ and also $a \nmid w_n$.  This means  $w_i:w_n \in (a)$. Hence, $(w_1,\ldots, w_{n-1}):w_n$ is linear in this case.

\noindent\textbf{CASE 4:} Let $w_n=u_{\ell}e_5  \in \Ac(e_5)$ for some $\ell \leq r$.  Note that  $(a,x) \subseteq (w_1,\ldots, w_{n-1}):w_n$ since $(u_{\ell} e_1 : u_{\ell} e_5) =(a)$ and $(u_{\ell} e_4 : u_{\ell} e_5) =(x)$.  The case $w_i \in \Ac(e_5)$ follows from the induction hypothesis.    If $w_i \in \Ac(e_1) \cup \Ac(e_2) \cup \Ac(e_3)$, then $w_i:w_n \in (a)$ as in the previous case since  each such $w_i$ is divisible by $a$ and also $a \nmid w_n$. Let $w_i \in  \Ac(e_4)$. If $x\nmid w_n$, then $w_i:w_n \in (x)$ and the claim holds. If $x \mid w_n$, then either $e_6 \shortmid u_{\ell}$ or $e_7 \shortmid u_{\ell}$. We consider the case $e_6 \shortmid u_{\ell}$ and note that the other case follows similarly.  Let $u_{\ell}=w' e_6$ for a generator $w'\in I(G)^{s-2}$.  Consider the elements $w_j= w'e_4e_5$ and $w_k= w'e_4e_9$. Note that $j,k <n$ and $(w_j:w_n)=(b)$ and $(w_k:w_n)=(z)$. This means $(a,b,x,z) \subseteq (w_1,\ldots, w_{n-1}):w_n$. As in the previous cases, let $w_i=e_4^{t_4}e_5^{t_5}\cdots e_9^{t_9}$ where $t_4>0$ and $w_n=e_5^{t'_5}\cdots e_9^{t'_9}$. If $w_i:w_n \notin (b,x,z)$, then considering degrees of these variables results with $t_4\leq 0$, a contradiction.  Thus,  $(w_1,\ldots, w_{n-1}):w_n$ is linear.

\noindent\textbf{CASE 5:} Let $w_n=u_{\ell}e_6  \in \Ac(e_6)$ for some $\ell \leq r$.  Observe that  $(a,b) \subseteq (w_1,\ldots, w_{n-1}):w_n$ since $(u_{\ell} e_2 : u_{\ell} e_6) =(a)$ and $(u_{\ell} e_4 : u_{\ell} e_6) =(b)$.  The case $w_i \in \Ac(e_6)$ follows from the induction hypothesis.    For the remaining case, note that each $w_i$  is divisible by $a$ or $b$ while $w_n$ is not divisible by neither $a$ nor $b$.  This means  $w_i:w_n \in (a,b)$. Thus, $(w_1,\ldots, w_{n-1}):w_n$ is linear in this case.

\noindent\textbf{CASE 6:} Let $w_n=u_{\ell}e_7  \in \Ac(e_7)$ for some $\ell \leq r$.  Notice that $(a,b,p) \subseteq (w_1,\ldots, w_{n-1}):w_n$ since $(u_{\ell} e_3 : u_{\ell} e_7) =(a), (u_{\ell} e_4 : u_{\ell} e_7) =(b)$ and $(u_{\ell} e_6 : u_{\ell} e_7) =(p)$.  The case $w_i \in \Ac(e_7)$ follows from the induction hypothesis.    For the remaining case, note that $w_i$ is divisible by either $a$ or $b$ or $p$ while $w_n$ is not divisible by neither $a$ or $b$. Furthermore, $w_n$ is not divisible by $p$ because if $p\mid w_n$, then $e_8 \shortmid u_{\ell}$ which implies that $e_7e_8=e_6e_9$ divides $w_n$, a contradiction. Thus,   $w_i:w_n \in (a,b,p)$ and   $(w_1,\ldots, w_{n-1}):w_n$ is linear. Hence, the claim holds for this case.

\noindent\textbf{CASE 7:} Let $w_n=u_{\ell}e_8  \in \Ac(e_8)$ for some $\ell \leq r$.  Notice that $(a,x) \subseteq (w_1,\ldots, w_{n-1}):w_n$ since $(u_{\ell} e_2 : u_{\ell} e_8) =(a)$ and $(u_{\ell} e_6 : u_{\ell} e_8) =(x)$. The case $w_i\in \Ac(e_8)$ follows from the induction hypothesis. For the remaining case, notice that each $w_i$ is divisible by either $a, b$ or $x$ while $w_n$ is not divisible by any of these three variables. Observe that $u_{\ell}=e_8^{t_8} e_9^{t_9}$. If $t_9\geq 1$, then $u_{\ell}=w' e_9$ for a generator $w' \in I(G)^{s-2}$ and $(w'e_8e_5:w'e_8e_9)=(b)$. If $t_9=0$, then $w_n=e_8^s$. Notice that, for each $w_i$, we have $w_i:w_n \in (a,x,q)$ since $\{a,x,q\}$ is a vertex cover of $G\setminus \{e_8\}$. Consider the monomial $w_j= e_8^{s-1}e_9$ where $j<n$ by  the definition of the order $\Nc^{(s)}$. Since $(w_j:w_n)=(q)$, the claim holds.

\noindent\textbf{CASE 8:} For the final case, we have $w_n=e_9^s$. Observe that $(e_9^{s-1}e_8:e_9^s)=(p)$ and $(e_9^{s-1}e_5:e_9^s)=(b)$ and $(e_9^{s-1} e_7:e_9^s)=(x)$. Since $\{b,p,x\}$ is a vertex cover of $G \setminus \{e_9\}$, we have $w_i : w_n \in (b,p,x)$ for each  $w_i \neq w_n$.  
\end{proof}
Now, we introduce a particular new class of finite graphs.  

\begin{Definition}\label{def:nurselian graph}
A finite graph $\Gamma$ is called \emph{CDCC graph} if $\Gamma$ satisfies the following conditions:
\begin{itemize}
\item
$\Gamma$ is gapfree;
\item
$\Gamma$ contains (i) \textbf{c}ricket, (ii) \textbf{d}iamond, (iii) $\mathbf{C}_4$ and (iv) $\mathbf{C}_5$ as induced subgraphs.
\end{itemize}
\end{Definition}
Let $\Gamma$ be a CDCC graph. Since $\Gamma$ contains $C_5$, it follows that the edge ideal $I(\Gamma)$ does not have linear resolution.  Furthermore, since $\Gamma$ contains cricket, diamond and $C_4$, it is unclear if all powers $I(\Gamma)^t$ with $t \geq 2$ have linear resolution.

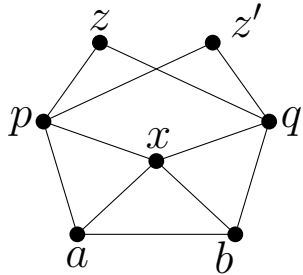
\begin{figure}[h]
\centering
\begin{tikzpicture}[scale=1.5]
\coordinate (z) at (-0.5,1){};
\coordinate (n) at (0.5,1){};
\coordinate (p) at (-1,0.3){};
\coordinate (q) at (1,0.3){};
\coordinate (x) at (0,0.-0.05){};
\coordinate (a) at (-0.7,-0.7){};
\coordinate (b) at (0.7,-0.7){};
\fill(z)circle(0.7mm);
\fill(n)circle(0.7mm);
\fill(p)circle(0.7mm);
\fill(q)circle(0.7mm);
\fill(x)circle(0.7mm);
\fill(a)circle(0.7mm);
\fill(b)circle(0.7mm);
\draw(z)--(p)--(a)--(b)--(q)--cycle;
\draw(p)--(x)--(q);
\draw(x)--(a);
\draw(x)--(b);
\draw(n)--(p);
\draw(n)--(q);
\draw(-0.5,1.2)node{{\Large $z$}};
\draw(0.8,1.2)node{{\Large $z'$}};
\draw(-1.2,0.3)node{{\Large $p$}};
\draw(1.2,0.3)node{{\Large $q$}};
\draw(0.02,0.15)node{{\Large $x$}};
\draw(-0.7,-0.9)node{{\Large $a$}};
\draw(0.6,-0.9)node{{\Large $b$}};
\end{tikzpicture}
\caption{A CDCC graph with minimum number of vertices}
\label{figure:oberwalfach}
\end{figure}

\begin{Example}
\label{Oberwolfach}
It is not difficult to see that no graph on $6$ vertices can satisfy all of the conditions in Definition~\ref{def:nurselian graph}. The graph $\Gamma$ displayed in Figure~\ref{figure:oberwalfach} is obtained from the graph in Figure~\ref{figure:selvi} by duplicating the vertex $z$. It is an example of a CDCC graph with smallest number of vertices. 
\end{Example}   

We now come to the next main theorem in the present paper.

\begin{Theorem}
\label{pasadenaAIM}
Given $n \geq 7$, there exists a CDCC graph $\Gamma_n$ with $n$ vertices for which all powers $I(\Gamma_n)^s$ with $s \geq 2$ have linear quotients.  
\end{Theorem}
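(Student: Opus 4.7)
The plan is to construct $\Gamma_n$ by iteratively duplicating a vertex in the graph $G$ of Figure~\ref{figure:selvi}, leveraging Proposition~\ref{l:pentagon_4inner_edges} as the base case and Proposition~\ref{p:duplication} as the engine of the induction.

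For $n=7$, I would set $\Gamma_7 = G^z$, namely the graph displayed in Figure~\ref{figure:oberwalfach}. By Proposition~\ref{l:pentagon_4inner_edges} the ideal $I(G)^s$ has linear quotients for every $s \geq 2$, and applying Proposition~\ref{p:duplication} to each such power yields linear quotients for $I(\Gamma_7)^s = (I(G)^z)^s$. To confirm that $\Gamma_7$ is CDCC, I would verify two items: (i) gapfreeness, through a short direct argument that vertex duplication preserves gapfreeness; if $e = z'b$ and $e'$ are disjoint edges of $G^z$, then since $z$ and $z'$ share the same neighborhood, a witness edge is obtained either as $zc$ (when $c$ lies in $e'$ and $c \in N_G(x)$) or as the witness edge already available from the gapfreeness of $G$ containing $b$; and (ii) the presence of each of the four forbidden induced subgraphs, which can be exhibited explicitly as the pentagon $a$--$b$--$q$--$z$--$p$--$a$ inherited from $G$, the $C_4$ on $\{p, z, q, z'\}$, the diamond on $\{a, b, x, p\}$ (missing only the edge $bp$), and the cricket on $\{p, a, x, z, z'\}$ with $p$ as the degree-$4$ central vertex forming the triangle $\{p,a,x\}$ with the two non-adjacent pendants $z$ and $z'$.

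For $n \geq 8$, I would proceed by induction, setting $\Gamma_n = (\Gamma_{n-1})^z$; each step introduces a new vertex whose neighborhood is $\{p, q\}$, and no edges are removed. Recursive application of Proposition~\ref{p:duplication} delivers linear quotients for every power $I(\Gamma_n)^s$ with $s \geq 2$. The gapfreeness-preservation argument from step (i) propagates through the entire tower of duplications, and the four induced subgraphs exhibited in $\Gamma_7$ remain induced in $\Gamma_n$ because the induced subgraph of $\Gamma_n$ on $V(\Gamma_7)$ coincides with $\Gamma_7$. Consequently each $\Gamma_n$ is CDCC with $n$ vertices.

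The main obstacle I anticipate is establishing the gapfreeness-preservation lemma for duplication in a clean way and pinpointing the four forbidden induced subgraphs inside $\Gamma_7$; both tasks are elementary but require direct case analysis. Once these combinatorial facts are secured, the theorem follows as a formal consequence of Propositions~\ref{l:pentagon_4inner_edges} and~\ref{p:duplication}.
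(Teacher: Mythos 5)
Your proposal is correct and is essentially the paper's own argument: take $\Gamma_7$ to be the duplication at $z$ of the graph in Figure~\ref{figure:selvi} (i.e., the graph of Figure~\ref{figure:oberwalfach}), obtain linear quotients of all powers from Proposition~\ref{l:pentagon_4inner_edges} together with Proposition~\ref{p:duplication}, and iterate vertex duplication, noting that duplication preserves the CDCC property. Your extra verifications (the explicit induced $C_5$, $C_4$, diamond, and cricket, and the gapfreeness-preservation argument) are sound, apart from the small slip that the witness edge for $e = z'b$ should be $z'c$ with $c \in e' \cap N_G(z)$ rather than $zc$.
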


\begin{proof}
Let $\Gamma_7$ denote the CDCC graph of Figure~\ref{figure:oberwalfach}. Since a finite graph obtained by duplicating a vertex from a CDCC graph is again CDCC graph, the desired result follows from Propositions \ref{p:duplication} and \ref{l:pentagon_4inner_edges}.
\, \, \, \, \, \, \, \, \, \, \, \, \, \, \, \, \, \, \, \, \, \, \, \, \, \, \, \, 
\end{proof}

While the matching number of the graph in Figure \ref{figure:oberwalfach} is $3$, in general, the matching number of a CDCC graph can be arbitrarily large. Indeed, duplicating both endpoints of an edge increases the matching number without changing the existing induced subgraphs and gapfreeness.


\section{A partial answer to Conjecture \ref{NewOrleans}} \label{sec.Partial}

In the present section, we consider a slightly modified version of Conjecture \ref{NewOrleans}. We show that under an additional assumption on the ordering, if $I^2, \, I^5, \, I^6$ and $I^7$ have linear quotients, then $I^q$ has linear quotients for all $q\geq 2$. Before stating the result, we introduce the concept of an admissible order. 

\begin{Definition} 
\label{def.goodOrder}
Let $G$ be a finite graph on the vertex set $V(G)$ and $E(G)$ the set of its edges.  An \emph{admissible} edge ordering of $G$ is a total order $\succ$ of the edges in $G$ with following property: for any $\{a, b\}, \{c, d\} \in E(G)$ such that $\{a, b\} \cap \{c, d\} = \emptyset$ and $\{a, b\} \succ \{c, d\}$, either all edges incident to $a$ are larger than $\{c, d\}$ or all edges incident to $b$ are larger than $\{c, d\}$.
\end{Definition}

Recall that, unless there is a misunderstanding, we use the notation $ab$ instead of $\{a, b\}$ for an edge of a finite graph. While not every order is admissible, such orders are ubiquitous in that every graph has one.

\begin{Lemma}
    \label{lem.GensOrder}
    Every finite graph possesses an admissible ordering of the edges.
\end{Lemma}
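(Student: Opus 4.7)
The plan is to exhibit an admissible order explicitly by lifting a chosen total order on $V(G)$ to an order on $E(G)$. First, I would fix an arbitrary total order $v_1 > v_2 > \cdots > v_n$ on the vertex set. For each edge $e = \{a,b\}$, I would write $e^+$ for the endpoint that is larger in this vertex order and $e^-$ for the other one. Then I would declare $e \succ f$ if $e^+ > f^+$, or if $e^+ = f^+$ and $e^- > f^-$ — that is, the lexicographic order on the pair $(e^+, e^-)$. Since the vertex order is total, this is clearly a total order on $E(G)$.

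To verify admissibility, I would take two disjoint edges $e = \{a,b\}$ and $f = \{c,d\}$ with $e \succ f$, and assume without loss of generality that $a = e^+$. Since the edges are disjoint, $a \notin \{c,d\}$, so $e^+ \neq f^+$. This forces the lexicographic decision to occur at the first coordinate, giving $a = e^+ > f^+$. In the vertex order this means $a$ is strictly larger than both $c$ and $d$. Then, for any edge $g$ incident to $a$, we have $g^+ \geq a > f^+$, so $g \succ f$ already at the first lexicographic coordinate. Hence every edge incident to $a$ exceeds $f$, which is exactly the admissibility condition witnessed at the endpoint $a$.

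There is no serious obstacle in the argument; the only subtlety is in recognizing that admissibility is essentially an \emph{inherit from a dominating vertex} property, and that lexicographic comparison on $(e^+, e^-)$ forces the winning edge to have an endpoint which dominates both endpoints of the loser. The construction is purely combinatorial, short, and depends only on the initial (arbitrary) labeling of the vertices of $G$.
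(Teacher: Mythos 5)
Your proof is correct. The lexicographic order on the pairs $(e^+,e^-)$ is a genuine total order on $E(G)$ since an edge is determined by its endpoint pair, and the verification is sound: disjointness forces $e^+\neq f^+$, hence $e^+>f^+$; then every edge $g$ incident to $a=e^+$ has $g^+\geq a>f^+$, so $g\succ f$ already at the first coordinate, and the endpoint $a$ witnesses admissibility. Your route differs from the paper's mainly in presentation rather than substance. The paper argues by induction on $|V(G)|$: remove a vertex $x$, take (inductively) an admissible order on $G\setminus\{x\}$, and place all edges incident to $x$ first, in arbitrary order among themselves. Unrolling that induction yields precisely the orders that bucket edges by their highest-priority endpoint with respect to some vertex elimination sequence; your order is the member of this family in which the vertex order is fixed globally at the outset and ties within a bucket are broken lexicographically by the second endpoint. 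What your version buys is an explicit closed-form description of an admissible order and a direct, induction-free verification; what the paper's version buys is a little extra flexibility (the order within each bucket is arbitrary and the eliminated vertex can be chosen adaptively at each step), though that generality is not needed to prove the lemma.
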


\begin{proof}
    We shall use induction on $n = |V(G)|$, the number of vertices in $G$.  The assertion is trivial if $n = 1$ or $n=2$. Suppose that $n \ge 3$ and the assertion is known to hold for any graph with fewer than $n$ vertices.  Let $x \in V(G)$ be any vertex in $G$, and set $G' = G \setminus \{x\}$. Suppose that the edges incident to $x$ in $G$ are $xx_1, \dots, xx_s$.

    By the induction hypothesis, there exists an admissible ordering of the edges in $G'$.  We shall order the edges in $G$ by $xx_1 \succ \dots \succ xx_s \succ \text{ edges in } G'$. We  show that this is an admissible ordering of the edges in $G$.  Consider any two edges $ab \succ cd$ with $ab \cap cd = \emptyset$ in this ordering. Clearly, $cd$ cannot be incident to $x$. 
    
    Consider the case that $ab$ is not incident to $x$. Then, both $ab$ and $cd$ are edges in $G'$. Thus, we can suppose that all edges incident to $a$ in $G'$ are larger than $cd$. If $a \not\in \{x_1, \dots, x_s\}$, then edges incident to $a$ in $G$ and $G'$ are the same, and we have the required property. If $a \in \{x_1, \dots, x_s\}$, then the only additional edge incident to $a$ in $G$ would be $xa$, which is larger than $cd$, by the edge ordering.

    Consider the remaining case that $ab$ is incident to $x$. Without loss of generality, we may assume that $a = x$. In this case, it is clear that all edges incident to $a$ are larger than $cd$.
\end{proof}

From now on, for a finite graph $G$, fix an admissible order of its edges:
    \begin{align}
    e_1 \succ \dots \succ e_s \label{eq.Compatible}\tag{$\ast$}
    \end{align}
The existence of this ordering is due to \Cref{lem.GensOrder}.

\begin{Definition}
    \label{def.NewOrder}
Let $G$ be a finite graph with edge ideal $I = I(G)$. Let 
    \begin{align}
        g_1 > \cdots > g_k
        \label{eq.I2}\tag{$\diamondsuit$}
    \end{align}
    be an order of the generators of $I^2$. We recursively construct the following orders for the generators of powers of $I$: if, for $q \ge 2$, 
    \begin{align*} 
    \Mc^{(q)}: u_1 > \dots > u_r
    \end{align*}
    denotes the constructed (or known) order of the generators of $I(G)^q$, then the order of the generators of $I(G)^{q+1}$ is given by
    \begin{align*} 
    \Mc^{(q+1)}: u_1e_1 > \dots > u_re_1 > u_1e_2 > \dots > u_re_2 > \dots > u_re_s, 
    \end{align*}
    where a generator $u_ie_j$ is omitted (\textit{redundant}) if it has already appeared before in the ordering. We call these orders $\Mc^{(q)}$ of the generators of $I^q$ for $q = 3,4,5,\ldots$ are the (\ref{eq.I2})--(\ref{eq.Compatible}) \textit{compatible} orders.
\end{Definition}

Note that the orders $\Mc^{(k)}$ given in Definitions \ref{def.NewOrder} look similar to the orders $\mathcal{N}^{(k)}$ described in Section \ref{sec.PureOrder}, but are different in nature. We now come to the last main result of the paper.

\begin{Theorem} \label{thm.LinQuotients}
    Let $G$ be a finite graph on the vertex set $V(G)$ and $E(G)$ the set of its edges.  Let $I = I(G)$ be the edge ideal of $G$.  
    Suppose that $I^2$ has linear quotients with an order of the generators written in the form of (\ref{eq.I2}). Suppose also that, for some integer $q \ge 7$, $I^2, \dots, I^q$ have linear quotients with the (\ref{eq.I2})--(\ref{eq.Compatible}) compatible orders of their generators. Then, $I^{q+1}$ also has linear quotients with the (\ref{eq.I2})--(\ref{eq.Compatible}) compatible order of its generators.
\end{Theorem}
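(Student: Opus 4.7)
My plan is to verify the linear quotients criterion of \Cref{l:lq-criterion}(2) for the compatible order $\Mc^{(q+1)}$. Let $w$ and $w'$ be two generators of $I^{q+1}$ with $w' > w$ in $\Mc^{(q+1)}$ and $\deg(w':w) > 1$; I need to find $w''$ with $w' \geq w'' > w$ in the order such that $w'':w$ is a single variable dividing $w':w$. Every generator of $I^{q+1}$ has a unique canonical presentation $u_\beta e_k$, where $k$ is the smallest index such that $w/e_k \in \Gc(I^q)$ and $u_\beta = w/e_k$; this records the position at which the generator first appears during the construction of $\Mc^{(q+1)}$. Writing $w = u_\beta e_k$ and $w' = u_\alpha e_{k'}$ in canonical form, the relation $w' > w$ is equivalent to $k' < k$, or to $k' = k$ and $\alpha < \beta$.

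The first scenario $k' = k$ is immediate: the linear quotients of $I^q$ (which hold by hypothesis with the compatible order) produce $u_\gamma > u_\beta$ with $u_\gamma:u_\beta = z$ a variable dividing $u_\alpha:u_\beta = w':w$, and setting $w'' = u_\gamma e_k$ yields $w'':w = z$ together with $w'' > w$ in $\Mc^{(q+1)}$ (either in the same $e_k$-block before $w$ or earlier as a redundancy). The substantial case is $k' < k$, in which the canonical form of $w$ uses no edge of index below $k$. Write $e_{k'} = ab$ and $e_k = cd$ and note that $G$ is gapfree, because $I^2$ having linear quotients forces linear resolution and hence gapfreeness by the Francisco--H\`a--Van Tuyl observation. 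If at least one of $a, b$, say $b$, appears in $w$, fix a factorization of $w$ containing an edge $e_\ell = bv$ and set $w'' = (w/e_\ell)\cdot e_{k'}$; then $w'' \in I^{q+1}$, its canonical form uses an edge of index $\leq k' < k$, so $w'' > w$ in $\Mc^{(q+1)}$, and $w'':w = a$ is a single variable (and lies in $w':w$ provided $\deg_a(w) < \deg_a(w')$; otherwise a symmetric variable in $w':w$ is chosen and the swap is adjusted accordingly). If neither $a$ nor $b$ divides $w$, admissibility of the edge order $(*)$ guarantees that one of $a, b$, say $a$, satisfies: every edge of $G$ incident to $a$ is $\succ e_k$. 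Gapfreeness supplies an edge $f$ of $G$ meeting both $e_{k'}$ and $e_k$; when $f$ is incident to $a$, say $f = ac$, the monomial $w'' = u_\beta \cdot f$ satisfies $w'':w = a \in w':w$, lies in $I^{q+1}$, and has canonical index at most the index of $f$, which is less than $k$.

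The principal obstacle is the residual subcase where admissibility forces the dominant endpoint to be $a$ but the gapfreeness witness $f$ is incident only to $b$, so no edge joining $a$ to $V(w)$ is immediately produced by the argument above. Here the hypothesis $q \geq 7$ becomes essential: the factorization of $w$ then contains at least eight edges, which affords enough room to carry out a chain of single-variable swaps through intermediate monomials, with each step certified by the already-established linear quotients of one of $I^2, \ldots, I^q$ under the compatible order. Organizing such chains and verifying at every stage that the intermediate monomial is a valid generator of $I^{q+1}$ strictly greater than $w$ in $\Mc^{(q+1)}$ will constitute the bulk of the technical work, in the spirit of the chaining arguments appearing in the proofs of \Cref{pentagon}, \Cref{p:SusanSelvi}, and \Cref{l:pentagon_4inner_edges}.
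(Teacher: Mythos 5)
Your setup matches the paper's: canonical first-appearance presentations $w = u_\beta e_k$, the criterion of \Cref{l:lq-criterion}, the easy case $k'=k$ via linear quotients of $I^q$, and in the case $k'<k$ the use of the admissible order to control which endpoint of the larger edge can be reached. But there is a genuine gap: the entire difficulty of the theorem lies in the subcase you label the ``principal obstacle'' and then defer, namely when neither endpoint variable of $e_{k'}=ab$ can be swapped in so that the resulting colon variable divides $Q = w':w$ (in the paper's notation, Case 2 with $a \nmid Q$ and $b \nmid Q$). Your sketch for the case where $b$ divides $w$ already breaks down there: the swap $w'' = (w/bv)\,e_{k'}$ gives $w'':w = a$, but when $a \nmid Q$ this is useless, and ``a symmetric variable in $w':w$ is chosen and the swap is adjusted accordingly'' is precisely the statement that needs proof. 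The paper resolves it by showing that in this situation $Q$ is a product of variables forming an independent set, writing $Q = x_1\cdots x_\gamma$ with $L_l = x_ly_l$, $M_l = y_lz_l$, and then a counting argument (using $\deg A = \deg B$) to produce edges $L_\theta, L_\delta$ meeting $Z=\{z_1,\dots,z_\gamma,d,e\}$ and an edge $M_\sigma$ joining them, followed by explicit constructions of $C$ and two nontrivial claims (Claims 1 and 2) verifying $C > B$ in the compatible order by unwinding through $\Mc^{(q-1)}, \Mc^{(q-2)},\dots$. None of this is present in your proposal.

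Two further points. First, your appeal to gapfreeness is not what carries the argument: in the disjoint case $ab\cap cd=\emptyset$ with $a,b \nmid w$, the paper does not hunt for a bridging edge at all; it forms $u_l = M_1\cdots M_{q-1}\cdot ab > u_k$ and invokes the linear quotients of $I^q$ to obtain a generator whose colon with $u_k$ is $a$ or $b$, both of which divide $Q$ there. A gapfreeness witness incident only to $b$ is therefore not an obstacle in that case, and gapfreeness plays no role in the hard shared-vertex case either. Second, your explanation of why $q\ge 7$ helps (``at least eight edges \dots\ enough room for a chain of single-variable swaps'') is not substantiated; in the paper the hypothesis enters concretely in Claim 2 and in Case 2.2(a), where after reserving $M_1, M_2, M_3, M_{q-1}, M_q$ (and $ad$) one still needs enough remaining edges so that the comparisons descend to well-defined orders $\Mc^{(q-4)}$, $\Mc^{(q-5)}$ with index at least $2$. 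Without carrying out that descent and the order-verification claims, the proof is incomplete at exactly the point where the theorem is hard.
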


\begin{proof}  Consider any two generators $A = u_ie_j > u_ke_\ell = B$ of $I^{q+1}$ in $\Mc^{(q+1)}$, and assume that they appear in the ordering $\Mc^{(q+1)}$ the first time in this representation; that is, $e_j$ and $e_\ell$ are the largest among all edges in the decomposition of $A$ and $B$, respectively. 
Set 
$$Q = A:B.$$
We need to show that there exists a generator $C$ of $I^{q+1}$ such that $C > B$ in $\Mc^{(q+1)}$, and $C:B$ is a variable that divides $Q$. 

Since $A > B$ in $\Mc^{(q+1)}$, we can assume that $e_j \succeq e_\ell$; that is, $j \le \ell$.
If $j = \ell$, then $u_i > u_k$ in $\Mc^{(q)}$, so there exists $u_t > u_k$ in $I^q$ such that $u_t : u_k$ is a variable that divides $Q = u_i: u_k$. Let $C = u_te_\ell$. Then, $C:u_ke_\ell = u_t:u_k$ is a variable that divides $Q$. Since $B$ appears in $\Mc^{(q+1)}$ first as $u_k e_\ell$ and $u_i > u_k$ in $\Mc^{(q)}$, it follows that $C > B$ in $\Mc^{(q+1)}$ and the assertion is proved in this case.

We can assume that $e_j \not= e_\ell$, i.e., $j \not= \ell$. Write $u_i = L_1 \cdots L_q$ and $u_k = M_1 \cdots M_q$, where $L_i$'s and $M_j$'s are edges in $G$, and set $e_j = ab, e_\ell = cd$. As observed before, $ab \succeq L_t$ and $cd \succeq M_t$ for all $t = 1, \dots, q$. Also, $ab \succ cd$. We shall analyze the following two possibilities: $ab \cap cd = \emptyset$ and $ab \cap cd \not= \emptyset$.
 \medskip
 
\noindent\textbf{CASE 1:} $ab \cap cd = \emptyset$. It can be seen that $ab \nmid u_k$. Indeed, if $ab \mid u_k$ then there must exist edges $ae = M_{t'}$ and $bf = M_{t''}$ in $u_k$, for some $t', t''$ (it is possible that $M_{t'} = M_{t''} = ab$). However, by the property of an admissible ordering, we must have either $ae \succ cd$ or $bf \succ cd$, which is a contradiction to the assumption that $M_t \preceq cd$ for all $t = 1, \dots, q$.

If $a \mid u_k$ and $b \nmid u_k$ (similarly for the case where $a \nmid u_k$ and $b \mid u_k$), then there exists an edge $ae$ in the presentation of $u_k$. Without loss of generality, we may assume that $ae = M_q$. Now, set 
$$C = (M_1 \cdots M_{q-1} \cdot cd) \cdot ab.$$
It can be seen that, since $ab \succ cd$, $C > B$ in $\Mc^{(q+1)}$. Moreover, $C:B = b$, which divides $Q$, as desired.

If $a \nmid u_k$ and $b \nmid u_k$ then set 
$$u_l = M_1 \cdots M_{q-1} \cdot ab.$$
Since $ab \succ cd \succeq M_q$, $u_l > u_k$ in $\Mc^{(q)}$. Therefore, by the linear quotients of $I^q$, there exists a generator $u_t > u_k$ of $I^q$ such that $u_t : u_k$ is a variable that divides $u_l : u_k$. Since $u_l : u_k = ab$, we have $u_t : u_k$ is either $a$ or $b$. Set 
$$C = u_t \cdot cd.$$
Since $B$ appears first in $\Mc^{(q+1)}$ in the form $u_k \cdot cd$, we have $C > B$. Moreover, $C:B$ is either $a$ or $b$, which divides $Q$ as desired.

\medskip

\noindent\textbf{CASE 2:} $ab \cap cd \not= \emptyset$. Since $ab \not= cd$, we may assume that $a = c$ and $b \not= d$. 

If $b \mid Q$, then set 
$$C = u_k \cdot ab.$$
Clearly, $C > B$ since $ab \succ ad$, and $C:B = b$ which divides $Q$.

If $a \mid Q$ and $b \nmid Q$, then there must exist an edge $be$ in $u_k$. Without loss of generality, assume that $be = M_q$ (particularly, $e \not= a$). Set
$$C = (M_1 \cdots M_{q-1}\cdot ad)\cdot ab.$$
Since $ab \succ ad \succeq be$, we have $C > B$. Also, $C:B = a$, which divides $Q$.

It remains to consider the case where $a \nmid Q$ and $b \nmid Q$. As before, since $b \nmid Q$, there must exists an edge $be$ in $u_k$, and we can assume that $be = M_q$.

Suppose that there exists an edge $uv \in E$ such that $uv \mid Q$. Without loss of generality, assume that $M_{q-1}$ is the largest among $M_1, \dots, M_{q-1}$. Set
$$u_g = (M_1 \cdots M_{q-2}) \cdot uv \cdot ab \, \, \, \text{ and } \, \, \, u_h = (M_1 \cdots M_{q-2}) \cdot be \cdot ad.$$
Again, since $ab \succ ad \succeq M_t$ for all $t$, we have 
$u_g > u_h \ge u_k$ in $\Mc^{(q)}$. By the linear quotients of $I^q$, there exists a generator $u_t > u_h$ of $I^q$ such that $u_t : u_h$ is a variable that divides $u_g : u_h = uv:de$ (which is necessarily not 1). Set
$$C = u_t \cdot M_{q-1}.$$
Then, $C:B = u_t:u_h$ is a variable which divides $uv \mid Q$. Hence, we are done in this case by the following Claim.

\medskip

\noindent\textsc{Claim 1.}
$C > u_h \cdot M_{q-1} = B$ in $\Mc^{(q+1)}$.

\noindent\textsc{Proof of Claim 1.} Since $u_t > u_h$, either $u_t$ contains an edge $f$ of $G$ for which $f \succ ad$ or $u_t = u_t' \cdot ad$, where $u_t'$ is a generator of $I^{q-1}$ and $u_t' > u_h/ad = M_1 \cdots M_{q-2} \cdot be.$ In the first case, due to the existence of $f$, we have $u_t \cdot M_{q-1} > u_h \cdot M_{q-1} = B$. Consider the case when $u_t = u_t' \cdot ad$, where $u_t' > (M_1 \cdots M_{q-2}) \cdot be = N$ in $\Mc^{(q-1)}$.

If $M_{q-1} > be$, then this implies that $u_t' \cdot M_{q-1} > N \cdot M_{q-1}$ in $\Mc^{(q)}$, and so $C = u_t' \cdot M_{q-1} \cdot ad > N \cdot M_{q-1} \cdot ad = u_h \cdot M_{q-1} = B$ in $\Mc^{(q+1)}$. If $M_{q-1} < be$, then either $u_t'$ contains an edge $f'$ of $G$ such that $f' \succ be \succ M_{q-1}$, or $u_t' = be \cdot u_t''$, where $u_t'' > M_1 \cdots M_{q-2}$ in $\Mc^{(q-2)}$. The existence of $f'$ gives $u_t' \cdot M_{q-1} > N \cdot M_{q-1}$ in $\Mc^{(q)}$ and, again, implies that $C = u_t' \cdot M_{q-1} \cdot ad > B$. On the other hand, if $u_t' = be \cdot u_t''$ as said, then since $M_{q-1} \succ M_i$ for all $i = 1, \dots, q-2$, we have $u_t'' \cdot M_{q-1} > (M_1 \cdots M_{q-2}) \cdot M_{q-1}$, which implies that $u_t' \cdot M_{q-1} = u_t'' \cdot M_{q-1} \cdot be > N \cdot M_{q-1}$. As before, it follows that $C > B$. The claim is proved.

We proceed by assuming now that an edge $uv$ such that $uv \mid Q$ does not exist. Particularly, it follows that $Q$ is the product of variables (with possibly repetitions) in an independent set of $G$.

Write $Q = x_1 \dots x_\gamma$, where the $x_l$'s (with possible repetition) form an independent set in $G$. By relabeling, if necessary, we may assume that $L_l = x_ly_l$, for $l = 1, \dots, \gamma$ (in particular, $\gamma \le q$). Since $y_l \nmid Q$, for $l = 1, \dots, \gamma$, we may also assume that $M_l = y_lz_l$, for $l = 1, \dots, \gamma$, or that some of the $y_l$'s are $d$ or $e$.

If $x_ld \in E$ (similarly, if $x_le \in E$) for some $l \in \{1, \dots, \gamma\}$, then set
$$C = (M_1 \cdots M_{q-1}) \cdot x_ld \cdot ab.$$
As observed before, since $ab \succ ad \succeq M_t$ for all $t$, $C > B$. Also, $C: B = x_l$, which divides $Q$. Suppose that $x_ld, x_le \not\in E$ for any $l = 1, \dots, \gamma$. This, by considering $\deg A = \deg B$, implies that $\gamma < q$.

\smallskip

\textit{Case 2.1.} Consider the case where there exists $\theta \in \{\gamma+1, \dots, q\}$ such that $L_\theta \subseteq Z = \{z_1, \dots, z_\gamma, d,e\}$. 

If $L_\theta = de$, then $B = (M_1 \cdots M_{q-1}) \cdot L_\theta \cdot ab$, which already appeared before in $\Mc^{(q+1)}$, a contradiction. If, for instance, $L_\theta = z_1d$ (similarly for the case where $L_\theta = z_1 e$), then set
$$C = (M_2 \cdots M_{q-1}) \cdot L_1 \cdot L_\theta \cdot ab.$$
Again, we have $C > B$. Also, $C:B = x_1:e$, so either $B = C$, which contradicts $cd$ being the largest edge in the decomposition of $B$, or $C:B = x_1$ divides $Q$. 

If, for example, $L_\theta = \{z_1,z_2\}$, then without loss of generality assume that $M_{q-1}$ is the largest among $M_3, \dots, M_{q-1}$ and set
$$u_g = (M_3 \cdots M_{q-2}) \cdot L_1 \cdot L_2 \cdot L_\theta \cdot ab.$$
Clearly, $u_g > B/M_{q-1}$ in $\Mc^{(q)}$. Furthermore, $u_g : B/M_{q-1} = x_1x_2: de$ (which is necessarily not 1). It follows that either $u_g = B/M_{q-1}$, which then implies that $B = u_gM_{q-1}$ which contradicts $cd$ being the largest edge in the decomposition of $B$, or there exists a generator $u_h > B/M_{q-1}$ of $I^q$ ($u_h$ could be $u_g$) such that $u_h : B/M_{q-1}$ is either $x_1$ or $x_2$, which then implies that $u_hM_{q-1} : B$ is either $x_1$ or $x_2$ that divides $Q$. This case is completed by showing that $u_hM_{q-1} > B$, which is done in the following Claim.

\smallskip

\noindent\textsc{Claim 2.} $u_hM_{q-1} > B$ in $\Mc^{(q+1)}$.

\noindent\textsc{Proof of Claim 2.} In Claim 1, we have seen a similar statement for a simpler situation. The proof of this claim goes in the same line of arguments (with more details). Since $u_h > B/M_{q-1}$ in $\Mc^{(q)}$, either $u_h$ contains an edge $f$ of $G$ such that $f \succ ad$ or $u_h = u_h'\cdot ad$, where $u_h'$ is a generator of $I^{q-1}$ and $u_h' > B/(M_{q-1}\cdot ad) = M_1 \cdots M_{q-2} \cdot M_q$ in $\Mc^{(q-1)}$. If $u_h$ contains $f \succ ad$, then $u_hM_{q-1} > B$ in $\Mc^{(q+1)}$ as claimed. 

Suppose that $u_h = u_h' \cdot ad$ and $u_h' > M_1 \cdots M_{q-2} \cdot M_q = N$ in $\Mc^{(q-1)}$. To proceed, without loss of generality, we may assume that the relative orders among the edges $M_1, M_2$ and $M_q$ is $M_1 \succeq M_2 \succeq M_q$. 

If $M_{q-1} \succeq M_1$ then $u_h' M_{q-1} > N \cdot M_{q-1}$ in $\Mc^{(q)}$, and so $u_hM_{q-1} = u_h'M_{q-1}\cdot ad > N \cdot M_{q-1} \cdot ad = B$ in $\Mc^{(q+1)}$ as asserted.
 
 If $M_1 \succ M_{q-1} \succeq M_2$, then since $u_h' > N$ in $\Mc^{(q-1)}$, either $u_h'$ contains an edge $f' \succ M_1$ or $u_h' = M_1 \cdot u_h''$, where $u_h'' > M_2 \cdots M_{q-2} \cdot M_q = N'$ in $\Mc^{(q-2)}$. In the former case, when $u_h'$ contains an edge $f' \succ M_1$, we have $u_h'M_{q-1} > NM_{q-1}$ in $\Mc^{(q)}$, and so $u_hM_{q-1} = u_h'M_{q-1} \cdot ad > NM_{q-1} \cdot ad = B$ in $\Mc^{(q+1)}$. In the latter scenario, when $u_h' = M_1 \cdot u_h''$ with $u_h'' > N'$ in $\Mc^{(q-2)}$, we have $u_h'' M_{q-1} > N'M_{q-1}$ in $\Mc^{(q-1)}$, and so $u_h''M_{q-1}M_1 > N'M_{q-1}M_1 = NM_{q-1}$ in $\Mc^{(q)}$. It follows that $u_hM_{q-1} = u_h'\cdot M_{q-1}\cdot ad = u_h'' \cdot M_1 \cdot M_{q-1} \cdot ad > NM_{q-1} \cdot ad = B$ in $\Mc^{(q+1)}$ as desired. 

 If $M_1 \succeq M_2 \succ M_{q-1} \succeq M_q$, then since $u_h' > N$ in $\Mc^{(q-1)}$, either $u_h'$ contains an edge $f' \succ M_1$ or $u_h' = M_1 \cdot u_h''$ with $u_h'' > N'$ in $\Mc^{(q-2)}$. As before, if $u_h'$ contains such an $f'$ then the claim is proved. On the other hand, since $u_h'' > N'$ in $\Mc^{(q-2)}$, either $u_h''$ contains an edge $f'' \succ M_2$ or $u_h'' = M_2 \cdot u_h'''$ with $u_h''' > M_3 \cdots M_{q-2} \cdot M_q = N''$ in $\Mc^{(q-3)}$. If such an edge $f''$ in $u_h''$ exists, then for the same reason as with $f'$, we have $u_h''M_{q-1} > N'M_{q-1}$, and so $u_h' M_{q-1} = u_h''M_{q-1}M_1 > N'M_{q-1}M_1 = NM_{q-1}$, which implies that $u_h M_{q-1} = u_h'M_{q-1} \cdot ad > NM_{q-1} \cdot ad = B$, as asserted. If $u_h'' = M_2\cdot u_h'''$ with $u_h''' > N''$, then $u_h''' M_{q-1} > N''M_{q-1}$, and so $u_h''M_{q-1} = u_h'''M_{q-1}M_2 > N''M_{q-1}M_2 = N'M_{q-1}$. It follows that $u_h'M_{q-1} = u_h''M_{q-1}M_1 > N'M_{q-1}M_1 = NM_{q-1}$ and, therefore, $u_hM_{q-1} = u_h'M_{q-1} \cdot ad > NM_{q-1} \cdot ad = B$ as claimed.
 
Finally, consider the case that $M_q \succ M_{q-1}$. The argument goes similarly as in the previous cases, where we successively write $u_h = u_h'\cdot ad$, $u_h' = u_h'' \cdot M_1$, $u_h'' = u_h''' \cdot M_2$ and $u_h''' = u_h'''' \cdot M_q$ with $u_h'''' > M_3 \cdots M_{q-2}$ in $\Mc^{(q-4)}$; note that $q-4 > 2$ since $q \ge 7$. The argument then completes by tracing back as seen before. The claim is established.

\smallskip

\textit{Case 2.2.} Consider the case where there does not exist such $L_\theta \subseteq Z$. Since $L_{\gamma+1} \cdots L_q$ is contained in $M_{\gamma+1} \cdots M_{q-1} \cdot d \cdot e \cdot z_1 \cdots z_\gamma$, and $\deg (L_{\gamma+1} \cdots L_q) =  \deg(M_{\gamma+1} \cdots M_{q-1}) + 2$, there must exist $\theta, \delta \in \{\gamma+1, \dots, q\}$ and $\sigma \in \{\gamma+1, \dots, q-1\}$ such that:
$L_\theta \cap Z \not= \emptyset, L_\delta \cap Z \not= \emptyset$, and $M_\sigma$ covers (consists of) two vertices, one from each of the edges $L_\theta$ and $L_\delta$. For simplicity of notations, we shall assume that $\sigma = 3$.

Set $L_\theta = uv$ and $L_\delta = u'v'$, where, without loss of generality, $u,u' \in Z$, $v,v' \not\in Z$, and $vv' = M_3$. We shall examine the following possibilities.

(a) $u, u' \in \{z_1, \dots, z_\gamma\}$; for instance, $u = z_1$ and $u' = z_2$. Set 
$$u_g = (L_1L_2L_\theta L_\delta)\cdot (M_4 \cdots M_{q-2}) \cdot ab.$$
Then, $u_g > (M_1 \cdots M_{q-2}) \cdot M_q \cdot ad = B/M_{q-1}$ are generator of $I^q$. Also, $u_g : B/M_{q-1} = x_1x_2 : de$ (which is necessarily not 1). Thus, there exists $u_h > B/M_{q-1}$ in $\Mc^{(q)}$ so that $u_h: B/M_{q-1}$ is a variable that divides $x_1x_2$. By a similar argument as in the Claim, in which we consider the relative order of $M_{q-1}$ compared to $M_1, M_2, M_3$ and $M_q$ noting that $q-5 \ge 2$ since $q \ge 7$, we see that $u_hM_{q-1} > B$. Also, $u_hM_{q-1}: B$ is a variable that divides $Q$ as desired.

(b) $u \in \{z_1, \dots, z_\gamma\}$ and $u' \in \{d,e\}$; for instance, $u = z_1$ and $u' = e$. Set
$$C = (L_1L_\theta L_\delta M_2) \cdot (M_4 \cdot M_{q-1}) \cdot (ab).$$
Then, $C > B$, and $C:B = x_1:d$. Thus, either $B=C$ is redundant, or $C:B = x_1$, which divides $Q$.
\end{proof}

The next two propositions show that linear quotients are implied from $I^2$ to $I^3$, and from $I^3$ to $I^4$.

\begin{Proposition}
    \label{prop.2to3}
    Assume the same setting as in Theorem \ref{thm.LinQuotients}. Suppose that $I^2$ has linear quotients with an order $\Mc^{(2)}$ of its generators. Then, $I^3$ has linear quotients with the order $\Mc^{(3)}$, as constructed as in Definition \ref{def.NewOrder}, of its generators.
\end{Proposition}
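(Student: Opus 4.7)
The plan is to specialize the case analysis from the proof of Theorem \ref{thm.LinQuotients} to the base case $q=2 \to q=3$. Take two generators $A = u_i e_j > u_k e_\ell = B$ of $I^3$ written as first appearances in $\Mc^{(3)}$, so that $e_j$ and $e_\ell$ are the $\succ$-largest edges in $A$ and $B$ respectively. The goal is to produce $C > B$ in $\Mc^{(3)}$ such that $C:B$ is a single variable dividing $Q = A:B$.

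If $e_j = e_\ell$, the order on the $e_j$-column of $\Mc^{(3)}$ is inherited directly from $\Mc^{(2)}$, so the linear quotients hypothesis on $I^2$ applied to $u_i > u_k$ yields $u_t > u_k$ with $u_t : u_k$ a variable dividing $u_i : u_k = Q$; then $C = u_t e_\ell$ works. If $e_j \succ e_\ell$, write $ab = e_j$ and $cd = e_\ell$, and follow the disjoint versus overlapping split of Theorem \ref{thm.LinQuotients}. When $ab \cap cd = \emptyset$, admissibility of $\succ$ forbids $ab \mid u_k$: if $\{a,b\}$ were split between $M_1, M_2$ then admissibility would force one of those edges to be $\succ cd$, contradicting $cd \succeq M_1, M_2$.

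In the easy subcases --- exactly one of $a,b$ divides $u_k$ in the disjoint case, or $b \mid Q$ (respectively $a\mid Q$ with $b\nmid Q$ forcing some $M_t = be$) in the overlapping case $a=c$ --- one exhibits $C$ directly as an edge-swap such as $C = M_1 \cdot cd \cdot ab$ or $C = M_1 \cdot ad \cdot ab$. In each of these, $C:B$ reduces to a single variable that lies in $Q$ via the defining divisibility hypothesis, and $C > B$ automatically since $C$'s largest edge is $ab \succ cd$.

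The delicate subcase is when neither $a$ nor $b$ contributes to $Q$. Here one would invoke the linear quotients of $I^2$ on an auxiliary generator $u_l = M_1 \cdot ab$; the key step is establishing $u_l > u_k$ in $\Mc^{(2)}$, which I expect to be the main obstacle, since $\Mc^{(2)}$ is assumed only to be a linear quotients order of $I^2$ and not manifestly compatible with the admissible edge order $\succ$. One would either argue that the swap $M_2 \leadsto ab$ with $M_2 \preceq cd \prec ab$ preserves the order (using additional structural properties of $\Mc^{(2)}$ available in the setting), or note that the recursive construction of $\Mc^{(3)}$ in \Cref{def.NewOrder} implicitly presumes $\Mc^{(2)}$ respects $\succ$ at the level of largest edges, so this compatibility may be supplemented tacitly. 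Once that is secured, the analog of Case 2.2 of Theorem \ref{thm.LinQuotients}, where $Q = x_1 \cdots x_\gamma$ is supported on an independent set, is handled by direct inspection: since $u_i = L_1 L_2$ has only two edge factors, the combinatorial possibilities are very restricted and $C$ can be written down explicitly, bypassing the multi-step push-down through $M_{q-1}, M_{q-2}, \ldots$ used in the proof of Theorem \ref{thm.LinQuotients}.
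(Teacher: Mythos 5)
Your outline follows the same strategy as the paper (specialize the case analysis of Theorem~\ref{thm.LinQuotients} to $q=2$), but as written it is not a proof: the step you yourself call ``the main obstacle'' --- establishing $u_l=M_1\cdot ab>u_k$ in $\Mc^{(2)}$ so that the linear-quotients hypothesis on $I^2$ can be invoked in the disjoint subcase with $a\nmid u_k$, $b\nmid u_k$ --- is left unresolved, and neither of your proposed escapes is available. The hypotheses impose no relation at all between the order $\Mc^{(2)}$ of (\ref{eq.I2}) and the admissible edge order (\ref{eq.Compatible}); Definition~\ref{def.NewOrder} declares only the orders $\Mc^{(q)}$ with $q\ge 3$ to be compatible, so there is no ``tacit'' compatibility to supplement, and adding such an assumption would prove a different, weaker statement. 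You correctly sense that this is exactly where the theorem's argument leans on the block structure (largest edge first) of the compatible orders, a structure $\Mc^{(2)}$ need not have; but flagging the difficulty and writing ``once that is secured'' is not resolving it. One would instead have to produce the required generator $C>B$ directly (for instance exploiting that $G$ is gapfree, which follows from $I^2$ having linear quotients, together with admissibility of $\succ$), and your proposal contains no such argument.

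The second half of your sketch also stops short of what actually constitutes the paper's proof. The paper disposes of the overlapping case by the dichotomy: either $\deg Q=1$, in which case $C=A$ itself works, or $\gamma\ge 2$ forces $\gamma=q=2$, so that $A=(x_1y_1)(x_2y_2)(ab)$ and $B=(y_1z_1)(be)(ad)$ with $y_2\in\{d,e\}$; hence $x_2d$ or $x_2e$ is an edge of $G$ and the explicit substitution $C=M_1\cdot(x_2d)\cdot ab$ (or with $e$) from Theorem~\ref{thm.LinQuotients} applies, so no descent through $\Mc^{(q-1)},\Mc^{(q-2)},\dots$ is ever needed. Your proposal merely asserts that ``the combinatorial possibilities are very restricted and $C$ can be written down explicitly''; since exhibiting this reduction is precisely the content of the proposition's proof, it must be carried out, not asserted. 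In short: same plan as the paper, but the two places where work is required --- the transfer of the disjoint-edge subcase that uses linear quotients of $I^2$, and the collapse of the hard overlapping subcases when $q=2$ --- are respectively left open and skipped.
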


\begin{proof}
    We proceed in the same line of arguments as that of Theorem \ref{thm.LinQuotients}. We need to prove that $\Mc^{(3)}$ gives a linear quotient order for $I^3$. Again, consider $A = u_ie_j$ and $B = u_ke_\ell$, where $e_i = ab \succ e_\ell = cd$, $u_i = L_1L_2$, $u_k = M_1M_2$, and $M_t \preceq cd$ for $t = 1,2$.
    Case 1 follows in exactly the same way as that in Theorem \ref{thm.LinQuotients}. 

    In Case 2, if $\gamma < 2$ then $Q$ is a variable and we can choose $C = A$. If $\gamma \ge 2$, then it forces $\gamma = q = 2$. That means 
    \begin{align*}
        A & = (x_1y_1)\cdot (x_2y_2) \cdot (ab) \\
        B & = (y_1z_1) \cdot (be) \cdot (ad).
    \end{align*}
    For $Q = x_1x_2$, we must also have that $y_2$ is $d$ or $e$. That means, either $x_2d$ or $x_2e$ is an edge in $G$. The same argument for this situation in Theorem \ref{thm.LinQuotients} now applies to complete the proof.
    \end{proof}

\begin{Proposition}
    \label{prop.4to5}
    Assume the same setting as in Theorem \ref{thm.LinQuotients}. Suppose that $I^3$ has linear quotients with the order $\Mc^{(3)}$ of its generators. Then, so does $I^4$ with the order $\Mc^{(4)}$ of its generators.
\end{Proposition}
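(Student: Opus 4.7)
The plan is to run the same case analysis as in the proof of Theorem~\ref{thm.LinQuotients}, specialized to $q=3$. For a pair $A=u_ie_j>u_ke_\ell=B$ of generators of $I^4$ appearing first in $\Mc^{(4)}$ in this form, write $u_i=L_1L_2L_3$, $u_k=M_1M_2M_3$, $e_j=ab\succ cd=e_\ell$, $Q=A:B$. The case $e_j=e_\ell$ is immediate from the linear quotients of $I^3$ applied to $u_i$ and $u_k$; Case~1 of Theorem~\ref{thm.LinQuotients} (when $ab\cap cd=\emptyset$) and the Case~2 subcases $b\mid Q$ and $a\mid Q,\,b\nmid Q$ all transfer verbatim since those arguments are independent of the size of $q$.

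The substantive situation is Case~2 with $a,b\nmid Q$, where we may take $a=c$ and $M_3=be$. When some edge $uv\in E(G)$ divides $Q$, the construction $u_g=M_1\cdot uv\cdot ab$ and $u_h=M_1\cdot be\cdot ad$ from the general proof gives valid generators of $I^3$; linear quotients of $I^3$ then produces $u_t>u_h$ with $u_t:u_h$ a variable dividing $uv$, and $C=u_t\cdot M_2$ satisfies $C>B$ in $\Mc^{(4)}$ by the $q=3$ analog of Claim~1, whose recursive descent bottoms out at $\Mc^{(1)}$, i.e., the admissible ordering on $E(G)$. When $Q=x_1\cdots x_\gamma$ is a product of independent vertices with $L_l=x_ly_l$ and $\gamma\le 3$, the cases $\gamma=1$ (take $C=A$) and ``some $x_ld$ or $x_le$ lies in $E(G)$'' are handled exactly as in Theorem~\ref{thm.LinQuotients}.

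The genuine obstruction is $\gamma=2$ with $L_3\subseteq Z=\{z_1,z_2,d,e\}$: the construction $u_g=(M_3\cdots M_{q-2})L_1L_2L_\theta\cdot ab$ from Theorem~\ref{thm.LinQuotients} collapses at $q=3$, since the prefix $M_3\cdots M_{q-2}$ is empty and $u_g$ becomes a generator of $I^4$ rather than of $I^3$, preventing the intended comparison inside $\Mc^{(3)}$. The key observation in the crucial subcase $L_3=z_1z_2$ is that a direct computation gives $u_i:u_k=x_1x_2=Q$ exactly, while $u_k:u_i=be$; if $u_i>u_k$ in $\Mc^{(3)}$, linear quotients of $I^3$ immediately produces $u_t>u_k$ with $u_t:u_k\in\{x_1,x_2\}$ and $C=u_t\cdot ad$ works. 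The main obstacle expected is the opposite ordering $u_k>u_i$ in $\Mc^{(3)}$, where one instead applies linear quotients to obtain $u_t>u_i$ with $u_t:u_i\in\{b,e\}$, realizes $u_t$ as $(u_i/v)\cdot e$ for a suitable $v\in\{x_1,x_2\}$ so that $C=u_t\cdot ab$ yields $C:B\in\{x_1,x_2\}\subseteq Q$, and verifies $C>B$ in $\Mc^{(4)}$ using the admissible edge ordering and, if needed, Proposition~\ref{prop.2to3}. The remaining subcases $L_3=de$, $L_3=z_id$, $L_3=z_ie$, and the analog of Case~2.2 follow from smaller variants of the same construction.
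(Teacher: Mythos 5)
Your overall plan---rerunning the proof of Theorem~\ref{thm.LinQuotients} at $q=3$ and checking where its constructions degenerate---is the same as the paper's, and your treatment of Case~1, of the subcases $b\mid Q$ and $a\mid Q$, of $\gamma=1$, and of the situation where some $x_ld$ or $x_le$ is an edge (which is exactly how $\gamma=3$ is absorbed) agrees with the paper. Where you diverge is the subcase $\gamma=2$, $L_3=z_1z_2$: the paper simply observes that $Q=x_1x_2$ forces $L_3\subseteq\{z_1,z_2,d,e\}$ and invokes the Case~2.1 argument of Theorem~\ref{thm.LinQuotients} for all of these subcases; it never splits on the relative position of $u_i$ and $u_k$ in $\Mc^{(3)}$. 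Your observation that the literal formula $u_g=(M_3\cdots M_{q-2})L_1L_2L_\theta\cdot ab$ degenerates at $q=3$ is a fair reading, and your branch $u_i>u_k$ (where linear quotients of $I^3$ applied to $u_i:u_k=x_1x_2$ yields $u_t>u_k$ with $u_t:u_k\in\{x_1,x_2\}$ and $C=u_t\cdot ad$) is sound.

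The genuine gap is the opposite branch, $u_k>u_i$ in $\Mc^{(3)}$, which is precisely the point you flag as the crux. From $u_k:u_i=be$, linear quotients of $I^3$ only yields \emph{some} $u_t>u_i$ with $u_t:u_i\in\{b,e\}$; you then ``realize $u_t$ as $(u_i/v)\cdot e$ for a suitable $v\in\{x_1,x_2\}$,'' but nothing gives you control over which variable of $u_i$ is exchanged. If, say, $u_t=e\cdot u_i/y_1$ or $b\cdot u_i/z_2$, then $C=u_t\cdot ab$ has $C:B$ of degree two (for instance $x_1x_2:dy_1$), which is neither a variable nor a divisor of $Q$, so the construction yields nothing usable; more fundamentally, information about generators above $u_i=A/(ab)$ is of the wrong kind, since what is needed is a generator differing from $B$ by $x_1$ or $x_2$. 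A related soft spot is your claim that the Claim~1-type order verification ``bottoms out at $\Mc^{(1)}$, i.e., the admissible ordering on $E(G)$'': there is no $\Mc^{(1)}$ in this framework, and $\Mc^{(2)}$ is an \emph{arbitrary} linear quotients order not organized into blocks by $\succ$, so a comparison $u_t'>N$ in $\Mc^{(2)}$ cannot be unfolded as ``either $u_t'$ contains a larger edge or $u_t'=be\cdot u_t''$ with $u_t''\succ M_1$.'' As written, the crucial subcase therefore remains unproved in your argument.
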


\begin{proof}
    Again, we follow the same line of arguments as in Theorem \ref{thm.LinQuotients} and, at the same time, identify the differences or where arguments in Theorem \ref{thm.LinQuotients} are not applicable.

    Case 1 follows with exactly the same argument. In Case 2, as noted in Proposition \ref{prop.2to3}, we may assume that $\gamma \ge 2$. That means, either $\gamma = 2$ or $\gamma = 3 = q$. 

    If $\gamma = 2$, then
    \begin{align*}
        A & = (x_1y_1)\cdot(x_2y_2)\cdot L_3 \cdot (ab) \\
        B & = (y_1z_1)\cdot(y_2z_2)\cdot (be)\cdot (ad).
    \end{align*}
    Since $Q = x_1x_2$, this forces $L_3 \subseteq \{z_1,z_2,d,e\}$, in which case the same argument in Theorem \ref{thm.LinQuotients} (when there exists $\theta$ such that $L_\theta \subseteq Z$) applies.

    If $\gamma = 3$, then 
    \begin{align*}
        A & = (x_1y_1)\cdot (x_2y_2) \cdot (x_3y_3) \cdot (ab) \\
        B & = (y_1z_1)\cdot (y_2z_2) \cdot (be) \cdot (ad),
    \end{align*}
    Since $Q = x_1x_2x_3$, this implies that $y_3$ is either $d$ or $e$. That is, either $x_3d$ or $x_3e$ is an edge in $G$. The same argument of Theorem \ref{thm.LinQuotients} in this case also applies to complete the proof. 
\end{proof}

\begin{Remark}
    Our proof does not currently work to show that $I^k$ has linear quotients implies $I^{k+1}$ has linear quotients for $k=4,5,6$
    in general.
\end{Remark}

\begin{Remark}
For relatively manageable graphs, the techniques from the proof of Theorem \ref{thm.LinQuotients} can be used to show that $I^k$ has linear quotients implies $I^{k+1}$ has linear quotients for $k=4,5,6$; 
for instance, if $G$ is an  \emph{anticycle}, i.e., the complementary graph of a cycle. In particular, by exhibiting a specific linear quotients order for $I(G)^2$, when $G$ is an anticycle, it follows that $I(G)^q$ has linear quotients for all $q \ge 2$. This is a main result of a recent preprint \cite{B+}.
\end{Remark}

\section*{Acknowledgements}
The research for this paper was initiated during the authors' stay at the American Institute of Mathematics (AIM), as part of the AIM SQuaRE program.  We would like to thank AIM for the warm hospitality and providing a stimulating  research environment.  Faridi is supported by NSERC Discovery Grant 2023-05929. H\`a  is partially supported by a Simons Foundation grant.  Kara is supported by NSF grant DMS-2418805.

\end{document}